\documentclass[10pt]{article}

\usepackage[a4paper,top=2.05cm,bottom=2.05cm,left=2.15cm,right=2.15cm]{geometry}
\usepackage[T1]{fontenc}
\usepackage{lmodern}
\usepackage{microtype}
\usepackage{amsmath,amssymb,amsthm,mathtools}
\usepackage{graphicx}
\usepackage{booktabs}
\usepackage{float}
\usepackage{enumitem}
\usepackage{hyperref}
\usepackage{xurl}
\usepackage{cite}
\usepackage{titlesec}
\usepackage{authblk}
\usepackage{color}
\usepackage{pgfplots}

\hypersetup{colorlinks=true,linkcolor=black,citecolor=black,urlcolor=black}
\setlist{nosep,leftmargin=1.7em}

\usepackage{placeins}

\newtheorem{theorem}{Theorem}[section]
\newtheorem*{theoremA}{Theorem A}

\newtheorem{lemma}[theorem]{Lemma}
\newtheorem{remark}[theorem]{Remark}

\title{\bfseries Quantitative finite-time approximation of\\ scattering fields for Alfv\'en waves in ideal MHD}

\author{%
	Junming Duan\thanks{
		School of Science and Engineering, The Chinese University of Hong Kong (Shenzhen), Guangdong, China.
		Email: \texttt{duanjunming@cuhk.edu.cn}.%
	}, \ \ 
	Mengni Li\thanks{(Corresponding author) School of Mathematics, Southeast University, Nanjing, China.
		Email: \texttt{krisymengni@gmail.com}.%
	}
}
\date{}

\begin{document}
\maketitle

\begin{center}
\textit{Dedicated to Professor Christian Klingenberg on the occasion of his 70th birthday}
\end{center}

\begin{abstract}
In this paper, we establish a quantitative finite-time approximation of scattering fields for nonlinear Alfv\'en waves in three-dimensional ideal incompressible magnetohydrodynamics near a strong constant magnetic background.  Using the global weighted estimates, we prove that truncating the scattering profiles at time $T$ introduces an error in both the $L^\infty$ norm and the  $H^{N_*+1}$ norm of order
$$
O\bigl(\varepsilon^2(R+T)^{-\delta}\bigr),
$$
where $\varepsilon\in (0,1)$ measures the size of the initial perturbation, $R\ge100$ is the scale parameter in the weighted energy,  $\delta\in(0,2/3)$ is the exponent related to  weights, and $N_*+1$ is the Sobolev regularity order assumed for the initial data.
 The pointwise estimate follows from the decay of the nonlocal pressure along nonlinear characteristics, while the Sobolev estimate combines higher-order weighted pressure bounds with estimates for derivatives of the characteristic flows. 
 Numerical experiments measure pointwise and Sobolev pressure-integral tails to verify the quadratic dependence of the initial perturbation amplitude.

\end{abstract}

\noindent\textbf{Keywords.} Alfv\'en waves;  magnetohydrodynamics;  scattering fields; weighted energy estimates; finite-time approximation.

\noindent\textbf{Mathematics Subject Classification (2020).} 76W05; 35B40; 	65M70; 35R30.

\section{Introduction}

Alfv\'en waves, first predicted by Hannes Alfv\'en in 1942
\cite{Alfven1942}, are fundamental wave motions arising from the interactions between
electromagnetic fields and electrically conducting fluids. 
 These waves play an important role in plasma
physics and astrophysics, and their nonlinear dynamics are governed by
the magnetohydrodynamics (MHD) equations. We consider the ideal incompressible setting, in which both the fluid viscosity and
magnetic resistivity are absent while the velocity field is divergence free. In the presence of a strong constant magnetic background, Bardos,
Sulem and Sulem \cite{BardosSulemSulem1988} established global
existence for spatially localized solutions and studied their
long-time behaviour. A number of subsequent works have further
developed this line of research. For example, He, Xu and Yu
\cite{HeXuYu2018} employed a   weighted energy method to
construct the global nonlinear stability of Alfv\'en waves near
a strong constant magnetic field.

Let $v=v(t,x)$ be the fluid velocity, $b=b(t,x)$ the magnetic field, and $p=p(t,x)$ the total pressure (including the fluid pressure and magnetic pressure).  The ideal incompressible MHD system on $\mathbb{R}^{1+3}$ can be written as
\begin{equation}\label{eq:MHD}
	\begin{cases}
		\partial_t v+v\cdot\nabla v=-\nabla p+b\cdot\nabla b,\\
		\partial_t b+v\cdot\nabla b=b\cdot\nabla v,\\
		\operatorname{div} v=\operatorname{div} b=0.
	\end{cases}
\end{equation}
We employ the Els\"asser variables
\[
Z_+=v+b,\qquad Z_-=v-b,
\]
and study perturbations of the zero background velocity field and the strong constant background magnetic field
\[
B_0=e_3=(0,0,1).
\]
The strong magnetic  field has been normalized to unit strength, and the perturbation is assumed small relative to this background. Thus we use
\[
z_+=Z_+-B_0,\qquad z_-=Z_-+B_0
\]
to rewrite the system \eqref{eq:MHD} as
\begin{equation}\label{eq:elsasser}
	\begin{cases}
		L_-z_+=-\nabla p,\\
		L_+z_-=-\nabla p,\\
		\operatorname{div} z_+=\operatorname{div} z_-=0,
	\end{cases}
	\qquad
	L_\pm:=\partial_t+Z_\pm\cdot\nabla.
\end{equation}

Near a strong magnetic background, the two families of Alfv\'en waves propagate in
opposite directions along the background magnetic field. 
 For spatially
localized perturbations, the separation of these two families of Alfv\'en waves leads to time decay in the
nonlinear interaction terms, although the linearized waves themselves
are non-dispersive. In fact, the linearization of system \eqref{eq:elsasser} is
\[
(\partial_t-\partial_3)z_+=0,\qquad
(\partial_t+\partial_3)z_-=0,
\]
so the two wave families propagate in opposite $x_3$-directions without linear dispersion. Then the decay of nonlinear interactions, together with weighted energy estimates,
can allow  the construction of scattering fields at characteristic
infinities and   lead  to a scattering problem related to small-data Alfv\'en waves. 

For classical background on scattering theory, we refer the reader to \cite{LaxPhillips1967,Friedlander1980}. The corresponding inverse problem is to recover the initial data
from the scattering fields.  Rigidity from infinity results showed that the vanishing of scattering fields forces the initial Alfv\'en waves to vanish \cite{LiYu2021}.  Then the inverse scattering theory of \cite{Li2023} went further to establish local scattering
isomorphisms near the zero solution, so that  suitable pairs of small  scattering fields uniquely determine the initial Alfv\'en waves. 
In these works, the construction of the scattering fields relies on weighted energy
 estimates, the separation of two families of Alfv\'en waves, and the null structure inherent in the MHD system. In particular,
 the pressure is nonlocal and satisfies a Poisson equation whose source consists of
 products of derivatives from opposite wave families. This structure combines with the characteristic weights to yield the decay
 of $\nabla p$ that is integrable in time. The resulting convergence
 of the integrals in $\nabla p$ along nonlinear characteristics finally ensures the construction of 
  the scattering fields.

Our goal in the present paper is twofold.  First, we give a brief account of the  scattering mechanism, retaining only the parts needed to see how the weighted geometry, pressure estimate and scattering map fit together.  The global existence and scattering results are recalled from \cite{Li2023}, and the pressure gradient can be derived from its global weighted bounds. Second, we show one quantitative consequence  that  
the scattering fields can be approximated by observations made at a large finite time, with an explicit 
algebraic convergence rate in $L^\infty$ and in the (unweighted) Sobolev space $H^{N_*+1}$. The Sobolev estimate controls the error up to the same derivative order as the initial data.
Numerical experiments are also presented to support these quantitative estimates.

More precisely, we let $R\ge100$ and choose $\delta\in(0,2/3)$.  For global small solutions of size $\varepsilon$, the pressure estimate obtained from the weighted theory is
\begin{equation}\label{eq:intro-pressure}
 |\nabla p(t,x)|\le C\varepsilon^2(R+t)^{-(1+\delta)},\qquad t\ge0.
\end{equation}
If $z_{+,T}$ and $z_{-,T}$ denote the future finite time fields truncated at time $T$, while $z_+^{\infty}$ and $z_-^{\infty}$ denote the corresponding  scattering fields at future characteristic infinities $\mathcal{F}_+$ and $\mathcal{F}_-$,  the resulting quantitative estimate is
\begin{equation}\label{eq:intro-rate}
 \|z_+^{\infty}-z_{+,T}\|_{L^\infty(\mathcal{F}_+)}
 +\|z_-^{\infty}-z_{-,T}\|_{L^\infty(\mathcal{F}_-)}
 \le C_{\delta}\varepsilon^2(R+T)^{-\delta}.
\end{equation}
The proof in Theorem~\ref{thm:finite-time} will use the tail estimate obtained by integrating \eqref{eq:intro-pressure} along the nonlinear characteristics. This pointwise estimate gives a uniform bound for the
	error introduced by truncating the characteristic integrals
	defining the scattering fields. We see that the exponent is the rate naturally inherited from the weighted energy used in the small-data theory.  Higher-order weighted pressure estimates, together with control of
		derivatives of the nonlinear characteristic flows, yield a 
		strengthening estimate in Sobolev space:
		\begin{equation}\label{eq:intro-rate-sobolev}
			\|z_+^{\infty}-z_{+,T}\|_{H^{N_*+1}(\mathcal{F}_+)}
			+\|z_-^{\infty}-z_{-,T}\|_{H^{N_*+1}(\mathcal{F}_-)}
			\le C_{\delta}\varepsilon^2(R+T)^{-\delta}.
		\end{equation}
		Here, $N_*+1$ is the Sobolev regularity order assumed for the initial data, and the Sobolev norms are unweighted and taken with respect to the
		initial position of the characteristic flow.  Theorem~\ref{thm:finite-time-sobolev} will 
		prove this estimate under the same initial-data regularity assumptions
		as Theorem~A. 
To obtain an error bound for the reconstructed initial data
		using the inverse scattering theorem, one would also need to
		control the truncation error in the corresponding weighted
		Sobolev norms. 
		
	Previous related numerical investigations include the pseudospectral simulations of incompressible MHD turbulence by Maron and Goldreich~\cite{MaronGoldreich2001}. 
	Using gyrokinetic simulations, Nielson, Howes and Dorland~\cite{NielsonHowesDorland2013} tested weakly nonlinear predictions for Alfv\'en wave collisions,
	while Verniero, Howes and Klein~\cite{VernieroHowesKlein2018} studied localized wavepacket collisions and observed that nonlinear interactions occur during overlap and cease between collisions after separation.
	More recently, Shestov, Voitenko and Zhukov~\cite{ShestovVoitenkoZhukov2022} studied counterpropagating Alfv\'en-wave collisions numerically in compressible MHD. 
	These studies concern wave interactions and turbulent energy transfer. 
	Our numerical experiments focus on the pressure-integral tails that define the finite-time scattering error. 
	The computations combine a Fourier pseudospectral discretization with dealiasing \cite{Orszag1971},
	the Helmholtz--Leray projection \cite{Leray1934},
	and a fourth-order Runge--Kutta method \cite{Lawson1967}. 
We integrate the pressure contribution along nonlinear characteristics to finite reference times and measure both a sampled maximum and an unweighted $H^6$ norm of the same vector tails.
  The parameter variations in time step, spatial resolution, domain size, and reference time are considered.
  These computations exhibit the $\varepsilon^2$ dependence in \eqref{eq:intro-rate} and \eqref{eq:intro-rate-sobolev}.
	

The rest of this paper is organized as follows.  Section~2 recalls  the characteristic geometry, weighted energy estimates, and pressure decay estimates.  Section~3 describes the scattering fields and scattering map.  Section~4 is devoted to the two finite-time approximation theorems.  Section 5 presents several numerical tests.  Finally, Section~6 contains concluding remarks.

\section{Energy  and pressure estimate}

Let $u_\pm$ be characteristic functions satisfying
\begin{equation*}
 L_+u_+=0,\qquad L_-u_-=0,
 \qquad u_\pm|_{t=0}=x_3.
\end{equation*}
Similarly, the transverse labels $x_i^\pm$, $i=1,2$, are transported by $L_\pm$ and agree with $x_i$ at $t=0$.  The level sets of $u_+$ and $u_-$ form a double characteristic foliation.  We use the weights
\begin{equation*}
 \langle u_+\rangle=(R^2+|u_+|^2)^{1/2},\qquad
 \langle u_-\rangle=(R^2+|u_-|^2)^{1/2},
\end{equation*}
where $R\ge100$ and $0<\delta<2/3$.

\subsection{The weighted energy estimate}

The characteristic weight attached to $z_+$ is $\langle u_-\rangle^{1+\delta}$, whereas that attached to $z_-$ is $\langle u_+\rangle^{1+\delta}$.  At the basic level,  the weighted energies are defined as
\begin{equation*}
 E_\pm(t)=\big\|\langle u_\mp(t,\cdot)\rangle^{1+\delta}
 z_\pm(t,\cdot)\big\|_{L^2(\mathbb{R}^3)}^2,
\end{equation*}
together with higher-order analogues and fluxes through the characteristic hypersurfaces. Set
\[
 \mathcal{E}^{N_*}(t):=\sum_{\pm}\sum_{k=0}^{N_*+1}
 \big\|\langle u_\mp(t,\cdot)\rangle^{1+\delta}
            \nabla^k z_\pm(t,\cdot)\big\|_{L^2(\mathbb{R}^3)}^2,
\]
where $\nabla^k$ denotes the collection of Cartesian spatial derivatives of order $k$. Throughout this paper, $R$, $\delta$, and the integer $N_*$ are fixed; constants may depend on them but are uniform in time and in the small-data class specified below.
\par

A convenient version of the global small-data theorem reads as follows.

\begin{theoremA}[Small-data global existence; \cite{HeXuYu2018,Li2023}]
Let $R\ge100$, $\delta\in(0,2/3)$ and $N_*\ge5$.  There exists $\varepsilon_0>0$ such that if the divergence-free initial data satisfy
\begin{equation*}
 \mathcal{E}^{N_*}(0):=
 \sum_{\pm}\sum_{k=0}^{N_*+1}
 \big\|(R^2+|x_3|^2)^{(1+\delta)/2}\nabla^kz_\pm(0)\big\|_{L^2(\mathbb{R}^3)}^2
 \le \varepsilon^2,
\end{equation*}
with $0<\varepsilon\le\varepsilon_0$, then the system \eqref{eq:elsasser} admits a unique global solution.   Its weighted energies and characteristic fluxes up to the corresponding orders remain bounded by $C\mathcal{E}^{N_*}(0)$. In particular, the weighted energies satisfy \[\sup_{t\ge0}\mathcal{E}^{N_*}(t)\le C\mathcal{E}^{N_*}(0).\]
\end{theoremA}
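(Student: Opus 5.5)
The statement is Theorem~A, the small-data global existence theorem. The paper quotes it from \cite{HeXuYu2018,Li2023}. I would prove it by a bootstrap (continuity) argument on the weighted energies and characteristic fluxes, following the weighted energy method of He--Xu--Yu.

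\textbf{Local theory and bootstrap setup.} Local well-posedness in $H^{N_*+1}$ gives a maximal solution on $[0,T^*)$. On this interval the characteristic functions $u_\pm$ and the transverse labels $x_i^\pm$ are well defined, by transporting the initial labels along the flows of $Z_\pm$. I would assume the following on $[0,T]$:
\begin{itemize}
\item the total energy-plus-flux quantity is bounded by $2C_1\varepsilon^2$;
\item the geometry stays close to the flat one, namely $|\nabla u_\pm-e_3|\lesssim\varepsilon$ and $|\nabla x_i^\pm - e_i|\lesssim\varepsilon$.
\end{itemize}
The second item follows from $L_\pm\nabla u_\pm=-\nabla Z_\pm\cdot\nabla u_\pm$. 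It also uses the decay of $\nabla z_\mp$ along the opposite family, which comes from Sobolev embedding applied to the weighted energies, since $N_*\ge5$ leaves room for $L^\infty$ control of several derivatives.

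\textbf{Energy identity.} I would commute $\nabla^k$ (for $k\le N_*+1$) with $L_\mp z_\pm=-\nabla p$. Then I would multiply by $\langle u_\mp\rangle^{2(1+\delta)}\nabla^k z_\pm$ and integrate over the spacetime region bounded by $\{t=0\}$, $\{t=T\}$ and a characteristic hypersurface. Since $L_\mp\langle u_\mp\rangle=0$, the weight is transported exactly, and the divergence theorem yields energy plus flux on the left side. The right side consists of two kinds of terms.
\begin{itemize}
\item Commutator terms $\nabla^l z_\mp\cdot\nabla^{k-l+1}z_\pm$. These pair opposite families. I would estimate them by putting the lower-order factor in $L^\infty$, using the weight $\langle u_\pm\rangle^{-(1+\delta)}$ inherited from its own energy, and controlling the higher-order factor by the flux through $u_\pm$-level sets. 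Integrating in $u_\pm$ gives $\int\langle u_\pm\rangle^{-(1+\delta)}\,du_\pm<\infty$. This is the null-structure mechanism, and it produces a bound of $\varepsilon^3$.
\item Pressure terms. The pressure solves $-\Delta p=\partial_i\partial_j(z_+^j z_-^i)$, which again has opposite-family bilinear structure. I would bound $\nabla^{k+1}p$ in the weighted $L^2$ space by splitting into near and far regions with respect to the weight. Commuting the weight through the Riesz transforms costs only a ratio $\langle u_\mp(x)\rangle/\langle u_\mp(y)\rangle$, and this ratio is controlled by $|x-y|$ using the geometry bounds.
\end{itemize}

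\textbf{Main obstacle and closing.} The hardest step is the pressure estimate, because $\nabla p$ is nonlocal and the weights are not comparable at distant points. I would first split the kernel into the regions $|x-y|\le\frac{1}{2}\langle u_\mp(x)\rangle$ and its complement. In the far region I would exploit the $|x-y|^{-4}$ decay of the kernel. The restriction $\delta<2/3$ should enter at exactly this point, to make the far-field sums converge.

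Collecting the estimates gives
\[
\mathcal{E}(T)+\text{flux}\le C\mathcal{E}^{N_*}(0)+C\varepsilon^3 .
\]
For $\varepsilon_0$ small this improves the bootstrap constant from $2C_1$ to $C_1$, and the geometric bounds improve in the same way. Continuity then gives $T^*=\infty$ together with $\sup_{t\ge0}\mathcal{E}^{N_*}(t)\le C\mathcal{E}^{N_*}(0)$. Uniqueness follows from an unweighted $L^2$ energy estimate for the difference of two solutions.
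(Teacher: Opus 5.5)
Your outline reproduces the architecture of the argument the paper relies on. The paper gives no proof of Theorem~A and simply cites He--Xu--Yu and Li; the ingredients you use (bootstrap on weighted energies and $C_{u_\pm}$-fluxes, exact transport of the weights, opposite-family structure of the nonlinearity, near/far splitting of the pressure kernel) are the right ones. The gap is in the nonlinear estimates. Your prescription only covers the case in which the \emph{opposite-family} factor is the low-order one.

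In the $z_+$ energy at order $k$, consider the commutator $\nabla^l z_-\cdot\nabla^{k-l+1}z_+$ with $l$ large; at top order this is $\nabla^{N_*+1}z_-\cdot\nabla z_+$. Here $\nabla^l z_-$ is not in $L^\infty$, so your prescription does not apply. If you put $\nabla^{k-l+1}z_+$ in $L^\infty$ instead, its weight $\langle u_-\rangle^{-(1+\delta)}$ only cancels half of $\langle u_-\rangle^{2(1+\delta)}$. You are left with $\varepsilon\int_0^t\!\int\langle u_-\rangle^{1+\delta}|\nabla^k z_+|\,|\nabla^l z_-|$, which has no integrable factor. The $C_{u_+}$-flux controls $\int\!\int\langle u_-\rangle^{2(1+\delta)}\langle u_+\rangle^{-(1+\delta)}|\nabla^k z_+|^2$, and the $C_{u_-}$-flux controls $\int\!\int\langle u_+\rangle^{2(1+\delta)}\langle u_-\rangle^{-(1+\delta)}|\nabla^l z_-|^2$. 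Cauchy--Schwarz between them gives the weight $(\langle u_+\rangle\langle u_-\rangle)^{(1+\delta)/2}$, which dominates $\langle u_-\rangle^{1+\delta}$ only where $\langle u_-\rangle\lesssim\langle u_+\rangle$. On the $z_-$ packet at late times ($u_+=O(1)$, $u_-\approx 2t$) this fails by a factor $t^{(1+\delta)/2}$. Replacing the $z_-$ flux by its sup-in-time energy only gives $\varepsilon^3\sqrt{t}$. So pointwise weighted $L^\infty$ bounds of the type in Lemma~\ref{Lemma:Sobolev} for the low-order factor do not close the bootstrap.

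What is missing is control of the low-order factor that is square integrable in its own characteristic variable, uniformly along the opposite hypersurfaces. For instance, $\int_{\mathbb{R}}\langle u_-\rangle^{2(1+\delta)}\sup_{C_{u_-}}|\nabla^j z_+|^2\,du_-\lesssim\varepsilon^2$ for small $j$, obtained from transverse two-dimensional Sobolev at $t=0$ plus transport of $\nabla^j z_+$ along $L_-$ with time-integrable sources. With this you foliate by $C_{u_-}$, on whose leaves $\langle u_-\rangle$ is constant, and use the flux of $\nabla^l z_-$ there.

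The same configuration occurs in the pressure source, namely $\nabla^{N_*+1}z_\mp\,\nabla z_\pm$ inside $\nabla^{N_*+2}p$. Moreover, a fixed-time bound on $\nabla^{k+1}p$ with weight $\langle u_\mp\rangle^{1+\delta}$ does not give a time-uniform energy bound. You need the spacetime estimate with weight $\langle u_\mp\rangle^{2(1+\delta)}\langle u_\pm\rangle^{1+\delta}$, as in Lemma~\ref{lem:pressure-higher}, paired with the flux by Cauchy--Schwarz. It is this product weight, whose ratio between $x$ and $y$ grows like $|x-y|^{\frac32(1+\delta)}$, that forces $8-3(1+\delta)>3$, i.e.\ $\delta<2/3$, for the far kernel $|x-y|^{-4}$ to be square integrable. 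Your single weight would not produce that threshold.

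Two smaller points. Run the bootstrap at the scale $\mathcal{E}^{N_*}(0)$ rather than $\varepsilon^2$, since $C\mathcal{E}^{N_*}(0)+C\varepsilon^3$ does not give the stated bound $C\mathcal{E}^{N_*}(0)$. Also, the bound on $\nabla u_\pm$ uses the decay of $\nabla z_\pm$, not $\nabla z_\mp$, along $L_\pm$.
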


Two consequences of these estimates will drive the  scattering argument in the following sections.  

\begin{lemma}\label{lemma:separation} Under the assumptions of Theorem~A, we have the following separation property:
\begin{equation*}
	\langle u_+\rangle\langle u_-\rangle\ge c(R+t),\qquad t\ge0.
\end{equation*}
\end{lemma}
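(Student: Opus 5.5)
The plan is to compare $u_+$ and $u_-$ with their linear counterparts $x_3+t$ and $x_3-t$, using the smallness from Theorem~A. Along the linear flow, $L_+\approx\partial_t+\partial_3$ transports $u_+$ to roughly $x_3-t$, and $L_-\approx\partial_t-\partial_3$ transports $u_-$ to roughly $x_3+t$. The exact sign conventions do not matter: in either case $u_+-u_-\approx\pm 2t$.

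\textbf{Step 1: approximate the characteristic functions.} Set $w=u_+-u_-$. Since $Z_\pm=z_\pm\pm e_3$, we have $L_\pm=\partial_t\pm\partial_3+z_\pm\cdot\nabla$. Differentiating $L_+u_+=0$ gives a transport equation for $\nabla u_+$ whose forcing is $-\nabla z_+\cdot\nabla u_+$. By the Sobolev embedding ($N_*\ge5$) and the bound $\sup_t\mathcal{E}^{N_*}(t)\le C\varepsilon^2$ from Theorem~A, the perturbation part of $\nabla z_\pm$ decays in the transverse characteristic variable. Its integral along the characteristics of the opposite family is then bounded by $C\varepsilon$, because $\int\langle u_\mp\rangle^{-(1+\delta)}<\infty$ along those lines. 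A bootstrap therefore yields
\[
|\nabla u_\pm-e_3|\le C\varepsilon\quad\text{for all }t\ge0,
\]
and hence $|\partial_3u_\pm-1|\le C\varepsilon$ and $|\partial_iu_\pm|\le C\varepsilon$ for $i=1,2$.

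\textbf{Step 2: show the two labels separate linearly in time.} Apply $\partial_t$ and use the equations:
\[
\partial_tu_+=-\partial_3u_+-z_+\cdot\nabla u_+,\qquad \partial_tu_-=\partial_3u_--z_-\cdot\nabla u_-.
\]
Subtracting, $\partial_t w=-(\partial_3u_++\partial_3u_-)+O(|z_+|+|z_-|)$. The first term equals $-2+O(\varepsilon)$ by Step 1. The second term is $O(\varepsilon)$ by the pointwise bound $|z_\pm|\le C\varepsilon$, which follows from Sobolev embedding applied to the weighted energy. Since $w(0)=0$, this gives
\[
|u_+(t,x)-u_-(t,x)|\ge(2-C\varepsilon)t\ge t
\]
for $\varepsilon_0$ small.

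\textbf{Step 3: convert separation into the product bound.} Using $\langle a\rangle\langle b\rangle\ge R\max(\langle a\rangle,\langle b\rangle)$ together with $\max(|a|,|b|)\ge|a-b|/2$, we get
\[
\langle u_+\rangle\langle u_-\rangle\ge R\max\bigl(R,\tfrac t2\bigr)\ge R\cdot\tfrac{R+t}{4}\ge c(R+t),
\]
where the last step uses $R\ge100\ge1$. Even the cruder estimate $\langle u_+\rangle\langle u_-\rangle\ge\max(\langle u_+\rangle,\langle u_-\rangle)\cdot R\ge R(R+t)/4$ suffices.

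The main obstacle is Step 1. Controlling $\nabla u_\pm$ uniformly in time requires the time-integrability of $\nabla z_\pm$ along the opposite characteristics, and this integrability itself relies on the separation being proved, so the argument is circular. I would resolve this with a joint bootstrap inside the continuity argument of Theorem~A. The bootstrap assumptions are $|\nabla u_\pm-e_3|\le 2C_0\varepsilon$, which gives separation on $[0,T]$. Separation yields the decay of $z_\mp$ along $L_\pm$-characteristics, which improves the bound to $C_0\varepsilon$. Alternatively, I would simply cite the corresponding estimate from \cite{HeXuYu2018,Li2023}, where this is established as part of the global theory.
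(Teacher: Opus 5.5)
Your argument is correct if you grant the gradient bound (by citation or by the bootstrap), but it takes a heavier route than the paper.

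\textbf{Your route.} You work at fixed $x$. Differentiating $w=u_+-u_-$ in $t$ produces $\partial_3u_\pm$ and $z_\pm\cdot\nabla u_\pm$, so you must first know $\nabla u_\pm=e_3+O(\varepsilon)$ for all time. That requirement is the source of the circularity and of the joint bootstrap.

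\textbf{The paper's route.} The paper works along characteristics. Since $L_+u_+=0$, one has $u_+(t,X_+(t,\xi))=\xi_3$. The third component of the flow satisfies $X_+^3(t,\xi)=\xi_3+t+\int_0^t z_+^3(s,X_+(s,\xi))\,ds$. Hence $|u_+(t,x)-(x_3-t)|\le t\|z_+\|_{L^\infty_{t,x}}$, and likewise $|u_-(t,x)-(x_3+t)|\le t\|z_-\|_{L^\infty_{t,x}}$. The only input is $\|z_\pm\|_{L^\infty_{t,x}}\le 1/4$. This follows from Theorem~A by unweighted Sobolev embedding, because the weights are at least $R^{1+\delta}\ge1$. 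It gives $|u_--u_+|\ge 3t/2$ with no information on $\nabla u_\pm$ and no bootstrap. The conversion to the product bound is then the same as your Step~3, with $c=3/8$. So the obstacle you single out comes from the fixed-$x$ formulation, not from the lemma itself.

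\textbf{What each buys.} Your route yields more: the geometric bound $\nabla u_\pm=e_3+O(\varepsilon)$ and the nearly sharp separation $(2-C\varepsilon)t$. The paper needs that information elsewhere; it cites Lemma~3.9 of \cite{Li2023} for exactly this bound in the proof of Theorem~\ref{thm:finite-time-sobolev}. If you ran the bootstrap yourself rather than citing it, it would also have to control $\nabla^2u_\pm$. Passing the weight $\langle u_\mp\rangle^{1+\delta}$ through $H^2\hookrightarrow L^\infty$ to get $|\nabla z_\pm|\lesssim\varepsilon\langle u_\mp\rangle^{-(1+\delta)}$ requires two derivatives of the weight. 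The bootstrap would also have to include the monotonicity of $u_\mp$ along the $L_\pm$ curves, which is what makes $\int\langle u_\mp\rangle^{-(1+\delta)}\,ds$ finite.
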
 
\begin{proof}
After reducing $\varepsilon_0$ so that
$\|z_\pm\|_{L^\infty_{t,x}}\le1/4$, the characteristic equations 
together with the initial conditions $u_\pm(0,x)=x_3$ imply 
\[
 |u_+(t,x)-(x_3-t)|\le t/4,\qquad
 |u_-(t,x)-(x_3+t)|\le t/4.
\]
Thus $|u_--u_+|\ge3t/2$. By the triangle inequality, it follows that
$\max\{|u_-|,|u_+|\}
\ge \frac12|u_--u_+|
\ge \frac{3t}{4}$.
Since both weights are at least $R$ and at least one is also
at least $3t/4$, we obtain
\[
\langle u_+\rangle\langle u_-\rangle
\ge R\max\{R,3t/4\}
\ge \frac{R}{2}\left(R+\frac{3t}{4}\right)
\ge \frac38(R+t),
\]
where the last inequality uses $R\ge1$.
This proves the desired estimate with $c=3/8$.
\end{proof}

	\begin{lemma}\label{Lemma:Sobolev}
		Under the assumptions of Theorem~A, for each integer
		$0\le j\le N_*-1$, we have
		\[
			\big\|\langle u_\mp\rangle^{1+\delta}\nabla^jz_\pm\big\|_{L^\infty}
		\le C\varepsilon.
		\]
	\end{lemma}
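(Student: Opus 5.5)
The plan is to deduce the pointwise bound from the weighted $L^2$ energies of Theorem~A through a Sobolev embedding $H^2(\mathbb{R}^3)\hookrightarrow L^\infty(\mathbb{R}^3)$ applied to the weighted quantity $f:=\langle u_\mp\rangle^{1+\delta}\nabla^jz_\pm$. Since $j\le N_*-1$, we have $j+2\le N_*+1$, so every derivative of $z_\pm$ that appears is controlled by $\mathcal{E}^{N_*}(t)\le C\varepsilon^2$. Concretely, I will write
\[
\|f\|_{L^\infty}\le C\sum_{m=0}^{2}\|\nabla^m f\|_{L^2}
\]
and expand $\nabla^m f$ by the Leibniz rule into terms of the form $\nabla^{a}\bigl(\langle u_\mp\rangle^{1+\delta}\bigr)\,\nabla^{j+b}z_\pm$ with $a+b=m\le2$.

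The terms with $a=0$ are bounded directly by $\mathcal{E}^{N_*}(t)^{1/2}\le C\varepsilon$. For $a\ge1$, I need pointwise bounds on derivatives of the weight. Using $\nabla\langle u\rangle^{1+\delta}=(1+\delta)\langle u\rangle^{\delta-1}u\,\nabla u$, we get
\[
|\nabla\langle u_\mp\rangle^{1+\delta}|\le C\langle u_\mp\rangle^{1+\delta}\,\frac{|\nabla u_\mp|}{\langle u_\mp\rangle}\le C\langle u_\mp\rangle^{1+\delta}|\nabla u_\mp|.
\]
For the second derivative, $|\nabla^2\langle u_\mp\rangle^{1+\delta}|\le C\langle u_\mp\rangle^{1+\delta}\bigl(|\nabla u_\mp|^2+|\nabla^2u_\mp|\bigr)$. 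It therefore suffices to show $\|\nabla u_\mp\|_{L^\infty}+\|\nabla^2u_\mp\|_{L^\infty}\le C$, uniformly in time.

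The step I expect to be the main obstacle is controlling the gradients of the characteristic functions. Differentiating $L_\mp u_\mp=0$ gives
\[
L_\mp(\nabla u_\mp)=-\nabla z_\mp\cdot\nabla u_\mp .
\]
Note that $\nabla Z_\mp=\nabla z_\mp$ because $B_0$ is constant. Grönwall along the characteristics of $L_\mp$ then yields
\[
\|\nabla u_\mp(t)\|_{L^\infty}\le \exp\Bigl(\int_0^t\|\nabla z_\mp\|_{L^\infty}\,ds\Bigr).
\]
So one needs $\int_0^\infty\|\nabla z_\mp\|_{L^\infty}\,ds\le C$. Along a characteristic of $L_\mp$ the weight $\langle u_\pm\rangle$ attached to $z_\mp$ is not constant, and by Lemma~\ref{lemma:separation} it grows like $R+s$ there. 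A weighted bound $\langle u_\pm\rangle^{1+\delta}|\nabla z_\mp|\le C\varepsilon$ would therefore give the integrable decay $C\varepsilon(R+s)^{-(1+\delta)}$.

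This creates circularity, since that weighted bound is the very statement being proved. I will resolve it by a bootstrap/continuity argument. On $[0,T^*]$ assume $\|\nabla u_\pm\|_{L^\infty}+\|\nabla^2u_\pm\|_{L^\infty}\le 2$, which holds at $t=0$ because $u_\pm(0)=x_3$. Under this assumption the argument above gives the lemma with constant $C\varepsilon$ (for $j\le 1$ at least, which is what the weight estimate needs). The Grönwall bound, together with the analogous transport equation for $\nabla^2u_\mp$, which has source terms $\nabla^2z_\mp\cdot\nabla u_\mp$ and $\nabla z_\mp\cdot\nabla^2u_\mp$, then improves the assumption to $1+C\varepsilon$ for small $\varepsilon_0$. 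Alternatively, the bounds on $\nabla u_\pm$ can be quoted from the bootstrap in \cite{HeXuYu2018,Li2023}, where they are part of the global theorem.

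With the weight derivatives controlled, every Leibniz term is bounded by $C\|\langle u_\mp\rangle^{1+\delta}\nabla^{j+b}z_\pm\|_{L^2}\le C\varepsilon$. This proves the lemma for all $0\le j\le N_*-1$.
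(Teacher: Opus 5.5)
Your core argument is the paper's proof: $H^2(\mathbb{R}^3)\hookrightarrow L^\infty(\mathbb{R}^3)$ applied to $\langle u_\mp\rangle^{1+\delta}\nabla^jz_\pm$, the Leibniz rule, $|\nabla^\ell\langle u_\mp\rangle^{1+\delta}|\le C\langle u_\mp\rangle^{1+\delta}$ for $\ell\le2$, and $j+2\le N_*+1$. Your fallback of quoting $\sup_t(\|\nabla u_\pm\|_{L^\infty}+\|\nabla^2u_\pm\|_{L^\infty})\le C$ from the global theory of \cite{HeXuYu2018,Li2023} is exactly what the paper does. With that choice the proof is complete.

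The self-contained bootstrap you sketch for those geometric bounds would not close as written, for three reasons.

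\emph{The Gr\"onwall bound uses the wrong norm.} Your bound $\|\nabla u_\mp(t)\|_{L^\infty}\le\exp\bigl(\int_0^t\|\nabla z_\mp\|_{L^\infty}\,ds\bigr)$ involves the spatial supremum, and $\|\nabla z_\mp(s)\|_{L^\infty}$ does not decay. The waves are non-dispersive and the packet is only transported; already for the linearization, $z_-(t,x)=z_-(0,x-te_3)$. The weighted bound gives nothing better than $\sup_x|\nabla z_-(s,x)|\le C\varepsilon R^{-(1+\delta)}$, which is uniform in $s$ but not integrable.

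\emph{Lemma~\ref{lemma:separation} does not give growth along a single characteristic.} It controls only the product $\langle u_+\rangle\langle u_-\rangle$. Along the $L_-$ curve from $\xi$ one has $u_-\equiv\xi_3$ and $u_+\approx\xi_3-2s$, which vanishes near $s\approx\xi_3/2$ when $\xi_3\gg1$. The lemma then only yields $\langle u_+\rangle\ge c(R+s)(R^2+\xi_3^2)^{-1/2}$, which is not uniform in $\xi$.

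\emph{The bootstrap hypothesis and the range of $j$ are too weak.} The correct mechanism is Gr\"onwall along each individual characteristic, e.g.
$|\nabla u_-(t,X_-(t,\xi))-e_3|\le\int_0^t|\nabla z_-|\,|\nabla u_-|(s,X_-(s,\xi))\,ds$.
One then bounds $\int_0^\infty\langle u_+(s,X_-(s,\xi))\rangle^{-(1+\delta)}\,ds\le C_\delta$ by changing variables $s\mapsto u_+(s,X_-(s,\xi))$. That change of variables needs the derivative $(Z_--Z_+)\cdot\nabla u_+=-2\partial_3u_++O(\varepsilon)$ to lie in $[-3,-1]$; this is the estimate \eqref{eq:opposite-weight-integral-hm} in the proof of Theorem~\ref{thm:finite-time-sobolev}. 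So the hypothesis must be of the form $|\nabla u_\pm-e_3|\le\frac14$ for both families simultaneously, since $|\nabla u_\pm|\le2$ says nothing about the size of $\partial_3u_\pm$. Also, the transport equation for $\nabla^2u_\mp$ has source $\nabla^2z_\mp\cdot\nabla u_\mp$, so you need the lemma with $j=2$, not only $j\le1$. This is harmless since $N_*\ge5$.
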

	
	\begin{proof}
			Since $\langle u_\pm\rangle=(R^2+|u_\pm|^2)^{1/2}\ge R\ge1$,
		the chain rule  yields
		\[
		\big|\nabla^\ell\langle u_\pm\rangle^{1+\delta}\big|
		\le C\langle u_\pm\rangle^{1+\delta},
		\qquad 0\le\ell\le2.
		\]
By definition of weights and geometric estimates for the characteristic functions, it is also clear to see that
		\[
		\sup_{t\ge0}
		\left(
		\|\nabla u_\pm(t)\|_{L^\infty(\mathbb{R}^3)}
		+\|\nabla^2u_\pm(t)\|_{L^\infty(\mathbb{R}^3)}
		\right)\le C.
		\]
	
		Fix $t\ge0$ and an integer $0\le j\le N_*-1$.
		The Sobolev embedding
		$H^2(\mathbb{R}^3)\hookrightarrow L^\infty(\mathbb{R}^3)$ and the Leibniz rule
		imply
		\begin{align*}
			\big\|\langle u_\mp\rangle^{1+\delta}\nabla^jz_\pm\big\|_{L^\infty}
			&\le C\big\|\langle u_\mp\rangle^{1+\delta}\nabla^jz_\pm\big\|_{H^2}\\
			&\le C\sum_{\ell=0}^{2}
			\big\|\langle u_\mp\rangle^{1+\delta}\nabla^{j+\ell}z_\pm\big\|_{L^2}\\
			&\le C\big(\mathcal{E}^{N_*}(t)\big)^{1/2}
			\le C\varepsilon.
		\end{align*}
		Here all norms are taken over $\mathbb{R}^3$, and the last two inequalities
		follow from $j+2\le N_*+1$ and Theorem~A.
	\end{proof}
	
	Combining Lemmas~\ref{lemma:separation} and~\ref{Lemma:Sobolev}
	yields the following interaction estimate.
	\begin{lemma}
		Under the assumptions of Theorem~A, for any integers
		$0\le a,b\le N_*-1$ and all $(t,x)\in[0,\infty)\times\mathbb{R}^3$, we have
		\begin{equation}\label{eq:product-decay}
			|\nabla^a z_+(t,x)|\,|\nabla^b z_-(t,x)|
			\le C\varepsilon^2(R+t)^{-(1+\delta)}.
		\end{equation}
	\end{lemma}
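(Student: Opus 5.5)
Fix $(t,x)$ and integers $0\le a,b\le N_*-1$. Lemma~\ref{Lemma:Sobolev}, applied once with $j=a$ to $z_+$ and once with $j=b$ to $z_-$, gives the pointwise bounds
\[
|\nabla^a z_+(t,x)|\le C\varepsilon\,\langle u_-(t,x)\rangle^{-(1+\delta)},\qquad
|\nabla^b z_-(t,x)|\le C\varepsilon\,\langle u_+(t,x)\rangle^{-(1+\delta)}.
\]
Multiplying these yields
\[
|\nabla^a z_+(t,x)|\,|\nabla^b z_-(t,x)|\le C\varepsilon^2\bigl(\langle u_+\rangle\langle u_-\rangle\bigr)^{-(1+\delta)}(t,x).
\]
The key structural point is that each factor carries the weight of the \emph{opposite} characteristic function. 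The product therefore sees the combination $\langle u_+\rangle\langle u_-\rangle$ rather than a single weight.

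Next I will invoke the separation property, Lemma~\ref{lemma:separation}, which gives $\langle u_+\rangle\langle u_-\rangle\ge c(R+t)$ at every point. Since $1+\delta>0$, raising this to the power $-(1+\delta)$ preserves the inequality in reversed form:
\[
\bigl(\langle u_+\rangle\langle u_-\rangle\bigr)^{-(1+\delta)}\le c^{-(1+\delta)}(R+t)^{-(1+\delta)}.
\]
Absorbing $c^{-(1+\delta)}$ into $C$, which is allowed because $\delta$ is fixed, gives \eqref{eq:product-decay} with $C$ independent of $t$, $x$ and $\varepsilon$.

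There is no real obstacle in this argument. The only step to check is that the index range $0\le a,b\le N_*-1$ matches the range allowed in Lemma~\ref{Lemma:Sobolev}, and it does. The hypotheses of Theorem~A, in particular the smallness of $\varepsilon_0$ needed for Lemma~\ref{lemma:separation}, are assumed throughout, so both lemmas apply at every $(t,x)$.
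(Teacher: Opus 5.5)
Your proposal is correct and matches the paper's proof exactly: you multiply the opposite-weight pointwise bounds from Lemma~\ref{Lemma:Sobolev} to get $C\varepsilon^2\bigl(\langle u_+\rangle\langle u_-\rangle\bigr)^{-(1+\delta)}$, then apply the separation bound of Lemma~\ref{lemma:separation}. Nothing further is needed.
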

	
	\begin{proof}
		By Lemma~\ref{Lemma:Sobolev}, we have
		\[
		|\nabla^a z_+|\,|\nabla^b z_-|
		\le C\varepsilon^2\langle u_-\rangle^{-(1+\delta)}
		\langle u_+\rangle^{-(1+\delta)}
		=C\varepsilon^2\big(\langle u_-\rangle\langle u_+\rangle\big)^{-(1+\delta)}.
		\]
	Using	Lemma~\ref{lemma:separation} then gives \eqref{eq:product-decay}.
	\end{proof}

From this lemma, we notice the essential feature that decay is gained when the two opposite wave families interact, rather than from dispersion of either family separately.

\subsection{The pressure decay}

The pressure is the main nonlocal term in \eqref{eq:elsasser}.  Taking the divergence and using incompressibility gives the Poisson equation
\begin{equation}\label{eq:pressure-poisson}
	-\Delta p=\partial_i z_-^j\,\partial_j z_+^i
	=\partial_i\partial_j(z_-^jz_+^i).
\end{equation}
The right-hand side has a favorable structure that every product contains one factor from each family.  

In view of \eqref{eq:product-decay}, this source already contains the time decay generated by the separation of the two families. 
We use the Newtonian representation to give a pointwise estimate for $\nabla p$, which   is part of Lemma 3.15 in \cite{Li2023} and is sufficient
for Theorem~\ref{thm:finite-time}. 
 
\begin{lemma}\label{lem:pressure-gradient}
	Under the assumptions of Theorem~A, we have the following pressure-gradient decay:
	\begin{equation*}
		\|\nabla p(t)\|_{L^\infty(\mathbb{R}^3)}
		\le C\varepsilon^2(R+t)^{-1-\delta},\qquad t\ge0.
	\end{equation*}
\end{lemma}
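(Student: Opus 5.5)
We use the Newtonian representation of \eqref{eq:pressure-poisson}. Differentiating once,
\[
\nabla p(t,x)=\frac{1}{4\pi}\int_{\mathbb{R}^3}\nabla_x\frac{1}{|x-y|}\,\big(\partial_i z_-^j\,\partial_j z_+^i\big)(t,y)\,dy ,
\]
and we split the integral with a smooth cutoff $\chi(x-y)$ supported in $|x-y|\le 2$ and equal to $1$ for $|x-y|\le1$. The two pieces are controlled by different mechanisms. The near part is controlled by the pointwise interaction bound \eqref{eq:product-decay}. The far part is controlled by the weighted $L^2$ energies of Theorem~A, after moving derivatives onto the kernel.

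\emph{Near part.} The kernel $\nabla\frac{1}{|x-y|}$ is $O(|x-y|^{-2})$, which is integrable on $|x-y|\le2$. The source is $\partial z_-\cdot\partial z_+$, a product of one derivative of each family. Applying \eqref{eq:product-decay} with $a=b=1$ (allowed since $N_*\ge5$) bounds the source by $C\varepsilon^2(R+t)^{-(1+\delta)}$ uniformly in $y$. Hence the near part is at most $C\varepsilon^2(R+t)^{-1-\delta}\int_{|x-y|\le2}|x-y|^{-2}\,dy$, which is of the required form.

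\emph{Far part.} We use the divergence form $\partial_i\partial_j(z_-^jz_+^i)$ and integrate by parts twice, so that the kernel becomes $\nabla^3\big((1-\chi)\,|x-y|^{-1}\big)=O(\langle x-y\rangle^{-4})$. No boundary terms arise because of the cutoff. The far part is then bounded by
\[
C\int\langle x-y\rangle^{-4}\,|z_-(t,y)|\,|z_+(t,y)|\,dy .
\]
For the product we again use the weights:
\[
|z_-||z_+|\le \big(\langle u_+\rangle\langle u_-\rangle\big)^{-(1+\delta)}\,\big|\langle u_+\rangle^{1+\delta}z_-\big|\,\big|\langle u_-\rangle^{1+\delta}z_+\big| .
\]
Lemma~\ref{lemma:separation} turns the prefactor into $C(R+t)^{-(1+\delta)}$. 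For the remaining factors there are two options. One is to bound both by $C\varepsilon$ via Lemma~\ref{Lemma:Sobolev} and use that $\langle x-y\rangle^{-4}$ is integrable. The other is to bound one factor in $L^\infty$ and the other in $L^2$ via Theorem~A, using $\langle\cdot\rangle^{-4}\in L^2$. Either gives $C\varepsilon^2(R+t)^{-1-\delta}$.

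\emph{Main obstacle.} I expect the main difficulty to be justifying the integration by parts and the pointwise use of the weights in the far region. The weights live on the nonlinear characteristic foliation, not on Cartesian coordinates, and we need the source to have enough decay at spatial infinity. Here the divergence-free form and the bounds $\|\nabla u_\pm\|_{L^\infty}\le C$ from the proof of Lemma~\ref{Lemma:Sobolev} should suffice: the solution is in weighted $H^{N_*+1}$, so $z_\pm$ and $\nabla z_\pm$ decay at infinity in an averaged sense, which makes the boundary terms vanish by a standard approximation argument. With this justified, combining the two parts proves the lemma.
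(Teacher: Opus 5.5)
Your proposal is correct and follows essentially the same route as the paper: the same cutoff splitting of the Newtonian representation, with the near part bounded via \eqref{eq:product-decay} ($a=b=1$) against the integrable kernel $|x-y|^{-2}$, and the far part, after two integrations by parts justified by approximation, bounded via \eqref{eq:product-decay} ($a=b=0$) against the kernel $|x-y|^{-4}$ on $|x-y|\ge1$. Your alternative $L^\infty\times L^2$ bound for the far part is also valid but not needed.
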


\begin{proof}
	Fix $t\ge0$. Now we choose a cut-off function $\chi\in C_c^\infty(\mathbb{R}^3)$ with $0\le\chi\le1$, $\chi=1$ on $|x|\le1$, and $\chi=0$ on $|x|\ge2$. Using the Newtonian representation of \eqref{eq:pressure-poisson}, we split the integral into near-field and far-field parts. Integrating by parts twice in the far-field part gives
	\begin{equation*}
		\begin{aligned}
			\nabla p(t,x)
			={}&-\frac{1}{4\pi}\sum_{i,j=1}^3\int_{\mathbb{R}^3}
			\chi(x-x')\frac{x-x'}{|x-x'|^3}
			\partial_i z_-^j(t,x')\,\partial_j z_+^i(t,x')\,dx'\\
			&-\frac{1}{4\pi}\sum_{i,j=1}^3\int_{\mathbb{R}^3}
			\partial_{x'_i}\partial_{x'_j}
			\left((1-\chi(x-x'))\frac{x-x'}{|x-x'|^3}\right)
			z_-^j(t,x')z_+^i(t,x')\,dx'.
		\end{aligned}
	\end{equation*}
	This identity follows first for smooth rapidly decaying fields and then by approximation. Moreover, the solution regularity and the $L^2$ bounds in Theorem~A ensure that the products appearing here have the required integrability.
	
	It is easy to see that
	the near-field kernel is bounded by $C|x-x'|^{-2}$ and vanishes for $|x-x'|\ge2$; the far-field kernel is bounded by $C|x-x'|^{-4}$ and vanishes for $|x-x'|\le1$; while terms involving derivatives of $\chi$ are supported in $1\le|x-x'|\le2$. Thus, by \eqref{eq:product-decay}, we derive
	\begin{align*}
		|\nabla p(t,x)|
		&\le C\int_{|x-x'|\le2}
		\frac{|\nabla z_-(t,x')|\,|\nabla z_+(t,x')|}{|x-x'|^2}\,dx' +C\int_{|x-x'|\ge1}
		\frac{|z_-(t,x')|\,|z_+(t,x')|}{|x-x'|^4}\,dx'\\
		&\le C\varepsilon^2(R+t)^{-1-\delta}
		\left(\int_{|x-x'|\le2}\frac{dx'}{|x-x'|^2}
		+\int_{|x-x'|\ge1}\frac{dx'}{|x-x'|^4}\right)\\
		&\le C\varepsilon^2(R+t)^{-1-\delta},
	\end{align*}
	since both integrals are finite in dimension three. Taking the supremum over $x\in\mathbb{R}^3$ proves the lemma.
\end{proof}

\begin{remark}
 The pressure-gradient decay in Lemma~\ref{lem:pressure-gradient} is one of the central manifestations of the MHD null structure in the weighted framework.  Since $\delta>0$, the first derivative is integrable in time.  Together with the uniform smallness of $z_\pm$, it immediately implies convergence of integrals in  \eqref{eq:scatter-plus}--\eqref{eq:scatter-minus}.
\end{remark}

 The higher-order weighted pressure estimate needed for Theorem~\ref{thm:finite-time-sobolev} is provided as follows.
 
	\begin{lemma}[Higher-order weighted pressure estimates]
		\label{lem:pressure-higher}
		Under the assumptions of Theorem~A,  we have the following decay estimate for higher-order derivatives of pressure:
		\begin{equation}\label{eq:pressure-higher-weighted}
			\sum_{\pm}\sum_{j=0}^{N_*+1}
			\int_0^\infty\!\!\int_{\mathbb{R}^3}
			\langle u_\mp\rangle^{2(1+\delta)}\langle u_\pm\rangle^{1+\delta}
			|\nabla^j\nabla p|^2\,dx\,dt
			\le C\varepsilon^4.
		\end{equation}
		In particular,
		\begin{equation}\label{eq:pressure-time-hm}
			\int_0^\infty(R+t)^{2(1+\delta)}
			\|\nabla p(t)\|_{H^{N_*+1}(\mathbb{R}^3)}^2\,dt
			\le C\varepsilon^4.
		\end{equation}
		Here the Sobolev norm is unweighted and taken with respect to the
		Cartesian spatial variable $x$, and $C$ depends only on the fixed
		parameters $R$, $\delta$ and $N_*$.
	\end{lemma}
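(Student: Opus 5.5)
The plan is to prove \eqref{eq:pressure-higher-weighted} by a weighted elliptic estimate for the Poisson equation \eqref{eq:pressure-poisson}, differentiated $j$ times. Then \eqref{eq:pressure-time-hm} will follow pointwise from Lemma~\ref{lemma:separation}.

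\textbf{Splitting the source.} For a fixed $0\le j\le N_*+1$, differentiate \eqref{eq:pressure-poisson} to get
\[
-\Delta\nabla^j\nabla p=\nabla\nabla^j\bigl(\partial_iz_-^j\partial_jz_+^i\bigr)=\nabla\,\partial_i\partial_j\nabla^{j-1}\bigl(z_-^jz_+^i\bigr)\quad(j\ge1).
\]
The key point is that the operator $\nabla\Delta^{-1}\nabla$ (or $\nabla^2\Delta^{-1}$) is a Calder\'on--Zygmund operator. So I would write
\[
\nabla^j\nabla p=\nabla\Delta^{-1}\nabla\cdot F_j,\qquad F_j=\sum_{a+b=j}c_{ab}\,\nabla^{a}z_-\,\nabla^{b+1}z_+,
\]
using the divergence-free structure to put one derivative outside. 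The weighted $L^2$ bound then reduces to a weighted $L^2$ bound on $F_j$, provided the weight $w=\langle u_\mp\rangle^{2(1+\delta)}\langle u_\pm\rangle^{1+\delta}$ can be commuted with the singular integral. Since $w$ is not an $A_2$ weight in $x$ uniformly, I would instead use the near/far-field splitting of Lemma~\ref{lem:pressure-gradient}:
\begin{itemize}
\item On $|x-x'|\le 2$, the weights at $x$ and $x'$ are comparable, by the Lipschitz bound on $u_\pm$ from the proof of Lemma~\ref{Lemma:Sobolev}. The local part is therefore controlled by the unweighted $L^2$ boundedness of the truncated kernel.
\item On the far field, integrate by parts onto the kernel $|x-x'|^{-4}$. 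One still has to transfer the weight from $x$ to $x'$; I would try the bound $\langle u(x)\rangle\lesssim\langle u(x')\rangle(1+|x-x'|)$, with the extra kernel decay absorbing the polynomial loss. This works provided the total power $\tfrac32(1+\delta)<4-\tfrac32$, which holds since $\delta<2/3$.
\end{itemize}

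\textbf{Bounding $\int_0^\infty\|\sqrt{w}F_j\|_{L^2}^2\,dt$.} In each product $\nabla^az_-\nabla^{b+1}z_+$ with $a+b+1\le N_*+2$, at least one factor has order at most $N_*-1$, since $N_*\ge5$. That factor goes in $L^\infty$ with its weight via Lemma~\ref{Lemma:Sobolev}; the other goes in weighted $L^2$. Suppose the $L^\infty$ factor is $z_-$, carrying $\langle u_+\rangle^{1+\delta}$. Then:
\begin{itemize}
\item The $L^2$ factor $z_+$ carries $\langle u_-\rangle^{1+\delta}$, which matches $w$ in the sign ``$+$''.
\item A leftover factor $\langle u_+\rangle^{-(1+\delta)/2}$ remains. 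It must be integrated in time, and this is exactly where the characteristic fluxes of Theorem~A enter. On the $u_+$-foliation, $\int_0^\infty\!\int\langle u_+\rangle^{-(1+\delta)}|\cdot|^2\,dx\,dt$ is bounded by the supremum of the fluxes through the level sets $\{u_+=c\}$ times $\int\langle c\rangle^{-(1+\delta)}dc<\infty$.
\end{itemize}
Note that the top order $N_*+2$ exceeds the energy level $N_*+1$. To avoid this loss, I would keep the full divergence form $\partial_i\partial_j(z_-^jz_+^i)$, so that both outer derivatives go into the Calder\'on--Zygmund operator and the products carry at most $N_*+1$ derivatives in total. This distribution of derivatives and weights is the step I expect to be the main obstacle.

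\textbf{Deriving \eqref{eq:pressure-time-hm}.} This is immediate from \eqref{eq:pressure-higher-weighted}. Since $\langle u_\pm\rangle\ge R$ and, by Lemma~\ref{lemma:separation}, $\langle u_\mp\rangle\langle u_\pm\rangle\ge c(R+t)$,
\[
\langle u_\mp\rangle^{2(1+\delta)}\langle u_\pm\rangle^{1+\delta}\ge \bigl(\langle u_\mp\rangle\langle u_\pm\rangle\bigr)^{1+\delta}\langle u_\mp\rangle^{1+\delta}\ge cR^{1+\delta}(R+t)^{1+\delta}.
\]
This gains the factor $(R+t)^{1+\delta}$, not $(R+t)^{2(1+\delta)}$. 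To reach the stated power, I would interpolate with the pointwise square $\|\nabla^j\nabla p\|_{L^\infty}\lesssim\varepsilon^2(R+t)^{-1-\delta}$, obtained exactly as in Lemma~\ref{lem:pressure-gradient} for $j\le N_*-2$. If necessary, I would instead strengthen the first step by placing both full weights $\langle u_-\rangle^{1+\delta}\langle u_+\rangle^{1+\delta}$ on the product pointwise. Summing over $j$ and $\pm$ would then give the claimed $\int_0^\infty(R+t)^{2(1+\delta)}\|\nabla p\|_{H^{N_*+1}}^2\,dt\le C\varepsilon^4$.
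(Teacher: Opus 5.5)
\textbf{The gap: passing from \eqref{eq:pressure-higher-weighted} to \eqref{eq:pressure-time-hm}.} You treat one sign at a time and use the product bound of Lemma~\ref{lemma:separation}. This correctly gives only $\langle u_\mp\rangle^{2(1+\delta)}\langle u_\pm\rangle^{1+\delta}\ge cR^{1+\delta}(R+t)^{1+\delta}$, and neither of your repairs supplies the missing factor $(R+t)^{1+\delta}$.
\begin{itemize}
\item A pointwise bound $\|\nabla^j\nabla p(t)\|_{L^\infty}\lesssim\varepsilon^2(R+t)^{-1-\delta}$ does not improve $\|\nabla^j\nabla p(t)\|_{L^2}$; that would require a time-weighted $L^1_x$ bound you do not have. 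In any case you only have the pointwise bound for $j\le N_*-2$, not up to $j=N_*+1$.
\item Putting both full weights $\langle u_-\rangle^{1+\delta}\langle u_+\rangle^{1+\delta}$ on the source leaves no negative power of $\langle u_\pm\rangle$ for the flux argument. The weighted source is then merely $O(\varepsilon^2)$ in $L^2_x$ for each $t$, which gives no time integrability on $[0,\infty)$.
\end{itemize}
The missing observation is that both terms of the $\pm$-sum in \eqref{eq:pressure-higher-weighted} multiply the \emph{same} $|\nabla^j\nabla p|^2$, so you may add the two weights. In each term the companion factor is at least $R^{1+\delta}\ge1$, hence
\[
\langle u_-\rangle^{2(1+\delta)}\langle u_+\rangle^{1+\delta}+\langle u_+\rangle^{2(1+\delta)}\langle u_-\rangle^{1+\delta}\ge\max\{\langle u_+\rangle,\langle u_-\rangle\}^{2(1+\delta)}.
\]
The bound $|u_--u_+|\ge 3t/2$ from the proof of Lemma~\ref{lemma:separation} then gives $\max\{\langle u_+\rangle,\langle u_-\rangle\}\ge\max\{R,\tfrac34t\}\ge\tfrac38(R+t)$. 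This is the paper's argument. What is needed is the larger of the two weights, squared, not their product. The bound $\langle u_+\rangle\langle u_-\rangle\ge c(R+t)$ is essentially $R\cdot\max$, and it discards exactly the factor you were missing.

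\textbf{On \eqref{eq:pressure-higher-weighted} itself.} The paper gives no new argument here; it quotes Lemma~3.20 of \cite{Li2023}. Your outline is a plausible reconstruction of that kind of proof: near/far splitting, weight transfer on the far field under $\tfrac32(1+\delta)<\tfrac52$, one factor in weighted $L^\infty$, and the leftover $\langle u_\pm\rangle^{-(1+\delta)}$ absorbed by fluxes. The derivative bookkeeping needs correcting, however.
\begin{itemize}
\item Your $F_j$ comes from the form $\partial_i(z_-^k\partial_kz_+^i)$, so it admits the term $z_-\nabla^{j+1}z_+$.
\item If instead you write, for $j\ge1$, $\nabla^j\nabla p=-(\nabla\Delta^{-1}\nabla)\nabla^{j-1}(\partial_iz_-^k\partial_kz_+^i)$, every factor carries at least one and at most $j\le N_*+1$ derivatives. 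There is then no loss at all.
\item The fix you propose goes the wrong way. In the full divergence form, absorbing $\partial_i\partial_k$ into the Calder\'on--Zygmund operator leaves $j+1=N_*+2$ derivatives on $z_-^kz_+^i$, with no lower bound on either factor. That form is useful only on the far field, where all derivatives are moved onto the kernel.
\end{itemize}
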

	
	\begin{proof}
		The weighted spacetime pressure estimates, i.e. Lemma 3.20 in \cite{Li2023}, directly give \eqref{eq:pressure-higher-weighted}. They control
		spatial derivatives of $\nabla p$ of orders $0$ through $N_*+1$ using the
		weighted energy and flux bounds in Theorem~A. Thus the highest index
		$j=N_*+1$ in \eqref{eq:pressure-higher-weighted} requires no
		additional regularity of the initial data.
		
	The proof of Lemma~\ref{lemma:separation} gives
	$|u_--u_+|\ge 3t/2$. Since
	$\langle u_\pm\rangle\ge R$ and $\langle u_\pm\rangle\ge |u_\pm|$,
	we obtain
	\[
	\max\{\langle u_+\rangle,\langle u_-\rangle\}
	\ge \max\left\{R,\frac12|u_--u_+|\right\}
	\ge \max\left\{R,\frac34t\right\}
	\ge \frac38(R+t).
	\]
	Since $\langle u_\pm\rangle\ge R\ge1$, it follows that
	\begin{align*}
		\langle u_-\rangle^{2(1+\delta)}\langle u_+\rangle^{1+\delta}
		+\langle u_+\rangle^{2(1+\delta)}\langle u_-\rangle^{1+\delta}
		&\ge \max\{\langle u_+\rangle,\langle u_-\rangle\}^{2(1+\delta)}
		\ge \left(\frac38\right)^{2(1+\delta)}(R+t)^{2(1+\delta)}.
	\end{align*}
	Multiplying this inequality by $|\nabla^j\nabla p|^2$,
	summing over $0\le j\le m$, and integrating in space and time,
	we obtain \eqref{eq:pressure-time-hm}
	from \eqref{eq:pressure-higher-weighted}. The proof of this lemma is now complete.
	\end{proof}

\section{Construction and analysis of scattering fields}

We parameterize the characteristic flows by the physical time $t$,
with initial position $\xi\in\mathbb{R}^3$:
\begin{equation*}
	\partial_tX_\pm(t,\xi)=Z_\pm(t,X_\pm(t,\xi)),
	\qquad X_\pm(0,\xi)=\xi.
\end{equation*}
For each initial label $\xi_-=(x_1^-,x_2^-,u_-)$, let $l_-(\xi_-)$ denote the integral curve of $L_-$ carrying that label.  Likewise $l_+(\xi_+)$, with $\xi_+=(x_1^+,x_2^+,u_+)$, is an integral curve of $L_+$.  
In these global characteristic coordinates, the transport equations in \eqref{eq:elsasser} can be integrated along the corresponding curves. Their future endpoints define the characteristic infinities
\[
\mathcal{F}_+=\{l_-(\xi_-):t=\infty\},\qquad
\mathcal{F}_-=\{l_+(\xi_+):t=\infty\},
\]
and the past endpoints similarly define $\mathcal{P}_+$ and $\mathcal{P}_-$.   By symmetry of time, we mainly focus on $\mathcal{F}_+$ and $\mathcal{F}_-$ in this paper.
The spaces $\mathcal{F}_\pm$ are identified with $\mathbb{R}^3$ by their initial labels, and $L^\infty(\mathcal{F}_\pm)$ denotes the essential supremum over these labels.

We retain the line-measure convention used in \cite{Li2023}. Thus the future scattering fields with this line-integral convention are defined by
\begin{align}
 z_+^{\infty}(\xi_-)
 &=z_+(0,\xi_-)-\int_0^\infty
 \nabla p(\tau,\xi_-)
 \sqrt{1+|z_-(\tau,\xi_-)-B_0|^2}\,d\tau,
 \label{eq:scatter-plus}\\
 z_-^{\infty}(\xi_+)
 &=z_-(0,\xi_+)-\int_0^\infty
 \nabla p(\tau,\xi_+)
 \sqrt{1+|z_+(\tau,\xi_+)+B_0|^2}\,d\tau.
 \label{eq:scatter-minus}
\end{align}
Here and below, spacetime quantities expressed in terms of
$\xi_\pm$ are evaluated along the corresponding characteristic
curves. For example, $\nabla p(t,\xi_\pm)$ denotes
$\nabla p(t,X_\pm(t,\xi_\pm))$.  Past scattering fields are defined analogously by integrating toward $t=-\infty$. By symmetry of time, we mainly focus on future scattering fields in this paper. 

Theorems~2.2, 2.3 and 2.4 in \cite{Li2023} show, respectively,
that these integrals converge, that the scattering fields belong
to the natural weighted Sobolev spaces on the characteristic
infinities, and that their difference from the corresponding
linear propagating fields is of quadratic order in the size of the initial data. Moreover, the future scattering fields $z_\pm^{\infty}$ differ from the corresponding initial profiles $z_\pm(0)$ only by a quadratic correction of size \(O(\varepsilon^2)\). 
This $O(\varepsilon^2)$ deviation shows that to first order in the size
of the initial data, the nonlinear scattering map agrees with the linear propagation map at the zero solution.

 One important application of scattering fields is to define  scattering maps. 
The four characteristic infinities allow four natural choices of paired scattering data: future--future, past--future, past--past, and future--past.  Each pair defines a nonlinear scattering map from small initial data to two scattering fields.  If $\mathcal{N}^{(\#)}$ denotes any of these nonlinear maps and $\mathcal{L}^{(\#)}$ denotes the corresponding linear propagation map, the  inverse scattering result in \cite{Li2023} can be briefly stated as
\begin{equation*}
 d\mathcal{N}^{(\#)}\big|_{0}=\mathcal{L}^{(\#)},
\end{equation*}
where the linear maps are isomorphisms between the corresponding weighted Sobolev spaces, the nonlinear scattering maps below are considered between the weighted
Sobolev spaces of admissible initial data and scattering fields. 

\begin{remark} 
We further remark that an application of inverse function theorem can yield the following inverse scattering statement: 
For sufficiently small weighted initial data as in Theorem~A, each of the four nonlinear scattering maps described above is a local diffeomorphism at the zero solution  in the admissible weighted spaces.  Consequently, the associated pair of scattering fields determines the initial pair $(z_+(0),z_-(0))$ uniquely in a neighborhood of zero. We refer the readers to Theorem 2.5 in \cite{Li2023} for a complete and  detailed version of inverse scattering result for Alfv\'en waves. 
\end{remark}

\section{Finite-time approximation of the future scattering fields}

In this section, we  estimate the error between the finite-time characteristic observation fields and their infinite-time scattering fields. 
We first derive a pointwise approximation estimate by integrating
the pressure decay along nonlinear characteristics. Then we strengthen
this estimate to the unweighted space $H^{N_*+1}$, using higher-order
pressure bounds and estimates for derivatives of the characteristic flows.

For $T\ge0$, we define the finite-time future profiles by cutting the  integrals in future scattering fields at time $T$:
\begin{align}
 z_{+,T}(\xi_-)
 &: =z_+(0,\xi_-)-\int_0^T
 \nabla p(\tau,\xi_-)
 \sqrt{1+|z_-(\tau,\xi_-)-B_0|^2}\,d\tau,
 \label{eq:trunc-plus}\\
 z_{-,T}(\xi_+)
 &: =z_-(0,\xi_+)-\int_0^T
 \nabla p(\tau,\xi_+)
 \sqrt{1+|z_+(\tau,\xi_+)+B_0|^2}\,d\tau.
 \label{eq:trunc-minus}
\end{align}
Indeed, these formulas are the finite-time counterparts of \eqref{eq:scatter-plus}--\eqref{eq:scatter-minus}, expressed in the same nonlinear characteristic coordinates.

\begin{theorem}[Pointwise finite-time approximation]\label{thm:finite-time}
Assume the hypotheses of Theorem~A.  Let $z_+^{\infty}$ on $\mathcal{F}_+$ and $z_-^{\infty}$ on $\mathcal{F}_-$ be the future scattering fields defined by \eqref{eq:scatter-plus}--\eqref{eq:scatter-minus}, and let $z_{+,T}$ and $z_{-,T}$ be defined by \eqref{eq:trunc-plus}--\eqref{eq:trunc-minus}.  Then there exists a constant $C_\delta$, independent of $T$ and uniform for $\varepsilon$, with allowed dependence on the fixed $R$, $\delta$ and $N_*$, such that for all $T\ge0$,
\begin{equation}\label{eq:finite-rate}
 \|z_+^{\infty}-z_{+,T}\|_{L^\infty(\mathcal{F}_+)}
 +\|z_-^{\infty}-z_{-,T}\|_{L^\infty(\mathcal{F}_-)}
 \le C_\delta\varepsilon^2(R+T)^{-\delta}.
\end{equation}
In particular, the future scattering fields are approximated by finite-time characteristic observation fields with an explicit error bound. 
\end{theorem}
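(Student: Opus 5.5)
The plan is to subtract the definitions. By \eqref{eq:scatter-plus} and \eqref{eq:trunc-plus}, for every label $\xi_-$,
\[
z_+^{\infty}(\xi_-)-z_{+,T}(\xi_-)=-\int_T^\infty \nabla p(\tau,X_-(\tau,\xi_-))\sqrt{1+|z_-(\tau,X_-(\tau,\xi_-))-B_0|^2}\,d\tau .
\]
The analogous identity holds for $z_-^\infty-z_{-,T}$ along $X_+$. Both integrals converge absolutely, as noted in the remark following Lemma~\ref{lem:pressure-gradient}, so this subtraction is legitimate.

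Next I will bound the integrand pointwise, uniformly in the label. The square-root factor is harmless. After shrinking $\varepsilon_0$, Lemma~\ref{Lemma:Sobolev} with $j=0$ gives $\|z_\pm\|_{L^\infty}\le C\varepsilon\le 1/4$, and since $|B_0|=1$, the factor is bounded by $\sqrt{1+(5/4)^2}\le 2$. For the pressure factor I use Lemma~\ref{lem:pressure-gradient}. That lemma gives the bound $\|\nabla p(\tau)\|_{L^\infty(\mathbb{R}^3)}\le C\varepsilon^2(R+\tau)^{-1-\delta}$, which is a supremum over all of $\mathbb{R}^3$. It therefore applies at the moving point $X_\mp(\tau,\xi_\mp)$ without any information about the characteristic flow. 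This is the key point: we never need to know where the characteristic goes.

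It then remains to integrate the tail:
\[
\int_T^\infty 2C\varepsilon^2(R+\tau)^{-1-\delta}\,d\tau=\frac{2C}{\delta}\,\varepsilon^2(R+T)^{-\delta}.
\]
Taking the essential supremum over $\xi_-\in\mathcal{F}_+$ and over $\xi_+\in\mathcal{F}_-$, and adding the two estimates, yields \eqref{eq:finite-rate} with $C_\delta=4C/\delta$. This constant depends on $R$, $\delta$ and $N_*$ only through the constants of Theorem~A and the lemmas, and is independent of $T$ and $\varepsilon$.

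There is no serious obstacle. The only points requiring care are bookkeeping ones. The first is that the measurability and essential supremum over labels are well defined: the flow maps $X_\pm(\tau,\cdot)$ are continuous (indeed diffeomorphisms for the smooth global solution), so the integrand is continuous in $(\tau,\xi)$. The second is making sure the smallness used to bound the square root is compatible with the $\varepsilon_0$ of Theorem~A. That is handled by shrinking $\varepsilon_0$, exactly as in the proof of Lemma~\ref{lemma:separation}.
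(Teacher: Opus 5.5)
Your proposal is correct and follows essentially the same route as the paper. It uses the tail identity obtained by subtracting the definitions, a uniform bound on the square-root factor via Lemma~\ref{Lemma:Sobolev} after shrinking $\varepsilon_0$, and the spatially uniform pressure decay of Lemma~\ref{lem:pressure-gradient} applied along the characteristics; it then integrates $(R+\tau)^{-1-\delta}$ explicitly over $[T,\infty)$, with your explicit constants ($\sqrt{41}/4\le 2$ and $C_\delta=4C/\delta$) simply making the paper's absorbed constant concrete.
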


\begin{proof}
We only prove the estimate for $z_+$ since the other family is identical.  Subtracting \eqref{eq:trunc-plus} from \eqref{eq:scatter-plus} gives the tail identity
\begin{equation}\label{eq:tail}
 z_+^{\infty}(\xi_-)-z_{+,T}(\xi_-)
 =-\int_T^\infty
 \nabla p(\tau,\xi_-)
 \sqrt{1+|z_-(\tau,\xi_-)-B_0|^2}\,d\tau.
\end{equation}
By the weighted Sobolev estimate in Lemma \ref{Lemma:Sobolev}, $z_-$ is uniformly $O(\varepsilon)$.  After reducing $\varepsilon_0$ if necessary, the square-root factor in \eqref{eq:tail} is therefore bounded by a universal constant.  The pressure estimate  in Lemma~\ref{lem:pressure-gradient} yields
\[
 \big|\nabla p(\tau,\xi_-)
 \sqrt{1+|z_-(\tau,\xi_-)-B_0|^2}\big|
 \le C\varepsilon^2(R+\tau)^{-1-\delta}.
\]
Hence, uniformly in $\xi_-$, we obtain
\begin{align*}
 |z_+^{\infty}(\xi_-)-z_{+,T}(\xi_-)|
 &\le C\varepsilon^2\int_T^\infty(R+\tau)^{-1-\delta}\,d\tau=\frac{C}{\delta}\varepsilon^2(R+T)^{-\delta}.
\end{align*}
Taking the supremum over $\xi_-$ gives the estimate on $\mathcal{F}_+$.
The same argument along the $L_+$ characteristics gives the
estimate on $\mathcal{F}_-$. Adding the two estimates and absorbing
the resulting constant into $C_\delta$ proves
\eqref{eq:finite-rate}. This finally completes the proof of the theorem.
\end{proof}

We now extend the estimate in Theorem~\ref{thm:finite-time} to
unweighted Sobolev norms on the characteristic infinities $\mathcal{F}_\pm$.
We use the initial positions $\xi_\mp\in\mathbb{R}^3$ of the characteristic
curves as coordinates on $\mathcal{F}_\pm$. For each nonnegative integer $m$,
we define
\[
\|f\|_{H^m(\mathcal{F}_\pm)}^2
:=\sum_{|\alpha|\le m}\int_{\mathbb{R}^3}
|\partial_{\xi_\mp}^{\alpha}f(\xi_\mp)|^2\,d\xi_\mp.
\]
The argument below controls derivatives with respect to $\xi_\mp$
up to the same order as the initial data.

	\begin{theorem}[Sobolev finite-time approximation]
		\label{thm:finite-time-sobolev}
		Assume the hypotheses of Theorem~A, and let
		$z_\pm^\infty$ and $z_{\pm,T}$ be the scattering fields and
		finite-time profiles defined above. After reducing $\varepsilon_0$ if
		necessary, there exists a constant $C_\delta$, depending only on
		the fixed parameters $R$, $\delta$ and $N_*$, such that
		\begin{equation}\label{eq:finite-rate-sobolev}
			\|z_+^\infty-z_{+,T}\|_{H^{N_*+1}(\mathcal{F}_+)}
			+\|z_-^\infty-z_{-,T}\|_{H^{N_*+1}(\mathcal{F}_-)}
			\le C_\delta\varepsilon^2(R+T)^{-\delta},\qquad T\ge0.
		\end{equation}
	\end{theorem}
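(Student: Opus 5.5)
We treat only $z_+$; the $z_-$ family is handled identically along the $L_+$ characteristics. We start from the tail identity \eqref{eq:tail}:
\[
z_+^\infty(\xi)-z_{+,T}(\xi)=-\int_T^\infty G(\tau,\xi)\,d\tau,\qquad
G(\tau,\xi):=\nabla p(\tau,X_-(\tau,\xi))\,\sqrt{1+|z_-(\tau,X_-(\tau,\xi))-B_0|^2}.
\]
By Minkowski's integral inequality,
\[
\|z_+^\infty-z_{+,T}\|_{H^{N_*+1}_\xi}\le\int_T^\infty\|G(\tau,\cdot)\|_{H^{N_*+1}_\xi}\,d\tau .
\]
It therefore suffices to bound $\|G(\tau)\|_{H^{N_*+1}_\xi}$ by a quantity $g(\tau)$ satisfying $\int_0^\infty (R+\tau)^{2(1+\delta)}g(\tau)^2\,d\tau\le C\varepsilon^4$. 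Cauchy--Schwarz in $\tau$ then gives
\[
\int_T^\infty g\le\Big(\int_T^\infty (R+\tau)^{-2(1+\delta)}d\tau\Big)^{1/2}C\varepsilon^2\le C_\delta\varepsilon^2(R+T)^{-1/2-\delta}\le C_\delta\varepsilon^2(R+T)^{-\delta}.
\]
The target bound is exactly \eqref{eq:pressure-time-hm} of Lemma~\ref{lem:pressure-higher}, so the problem reduces to transferring Sobolev norms from $x$ to the Lagrangian label $\xi$.

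\textbf{Step 1: control of the flow map.} We would show that for $1\le k\le N_*+1$,
\[
\sup_{\tau,\xi}|\nabla_\xi X_-(\tau,\xi)|\le C,\qquad
\|\nabla_\xi^kX_-(\tau,\cdot)\|_{L^\infty}\le C \ \ \text{for } k\le N_*-1,
\]
with $L^2$-type control for the top orders. Differentiating $\partial_\tau X_-=Z_-(\tau,X_-)$ gives
\[
\partial_\tau\nabla_\xi X_-=\nabla z_-(\tau,X_-)\,\nabla_\xi X_-,
\]
and $B_0$ is constant, so only $z_-$ enters. Gronwall's inequality requires $\int_0^\infty\|\nabla z_-(\tau)\|_{L^\infty}d\tau<\infty$. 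This is the main obstacle, because $z_-$ evaluated along its own characteristic family $X_-$ does not decay in time. Lemma~\ref{Lemma:Sobolev} gives only $|\nabla z_-|\le C\varepsilon\langle u_+\rangle^{-(1+\delta)}$. Along an $L_-$ curve, however, $u_+$ moves at speed roughly $2$ (by the proof of Lemma~\ref{lemma:separation}, $u_+\approx x_3-\tau$ while $X_-$ travels with $u_-$ fixed). Hence
\[
\langle u_+(\tau,X_-(\tau,\xi))\rangle\gtrsim R+|\xi_3-2\tau|+\dots
\]
and the weight is integrable in $\tau$ along each curve, uniformly in $\xi$, with $\int_0^\infty\langle u_+\rangle^{-1-\delta}d\tau\le C/\delta$. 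This yields $|\nabla_\xi X_--I|\le C\varepsilon$ and, in particular, $\det\nabla_\xi X_-=1$ by incompressibility. The same mechanism handles higher derivatives inductively: $\nabla^k_\xi X_-$ satisfies a linear ODE whose forcing consists of $\nabla^jz_-(\tau,X_-)$ multiplied by lower-order products, and each such forcing factor is again integrable along the curve. For the top orders $k=N_*,N_*+1$, where only weighted $L^2$ control of $\nabla^k z_-$ is available, we would use the characteristic flux bounds of Theorem~A through $\Sigma$-type flux integrals along $u_-=\mathrm{const}$ hypersurfaces, instead of pointwise bounds.

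\textbf{Step 2: chain rule and change of variables.} By Fa\`a di Bruno's formula, $\partial_\xi^\alpha[\nabla p(\tau,X_-)]$ is a sum of terms of the form
\[
(\nabla^j\nabla p)(\tau,X_-)\prod_i \nabla_\xi^{k_i}X_- ,\qquad j\le|\alpha|.
\]
When all $k_i$ are low, we put the flow factors in $L^\infty$ and change variables $x=X_-(\tau,\xi)$, which is measure preserving, so that $\|(\nabla^j\nabla p)\circ X_-\|_{L^2_\xi}=\|\nabla^j\nabla p\|_{L^2_x}$. When some $k_i$ is high, the accompanying $j$ is low, and we place $\nabla^j\nabla p$ in $L^\infty$ using the embedding $H^2\hookrightarrow L^\infty$. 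Since $N_*\ge5$, this applies to $j\le N_*-1$ and gives $\|\nabla^j\nabla p\|_{L^\infty}\le C\|\nabla p\|_{H^{N_*+1}}$. We then need $\|\nabla^{k}_\xi X_-\|_{L^2_\xi}$-type bounds, or more precisely bounds on $\nabla_\xi^k(X_--\xi)$ with $k\ge 2$, which lie in $L^2$ and are of size $C\varepsilon$ by the Step~1 ODE. The square-root factor $\sqrt{1+|z_--B_0|^2}$ is treated in the same way: it is a smooth function of $z_-$ bounded together with its derivatives, and its $\xi$-derivatives are products of $\nabla^jz_-\circ X_-$ and flow derivatives, hence bounded in $L^\infty$ or in $L^2$ by $C$. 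The factors where both terms carry high derivatives cannot both be high, since the total order is at most $N_*+1$.

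\textbf{Step 3: conclusion.} Steps 1 and 2 give $\|G(\tau)\|_{H^{N_*+1}_\xi}\le C\|\nabla p(\tau)\|_{H^{N_*+1}_x}=:g(\tau)$. Inserting this into the Cauchy--Schwarz argument with \eqref{eq:pressure-time-hm} yields \eqref{eq:finite-rate-sobolev}. The decisive and most delicate point is Step 1 at the top derivative orders. If pointwise control of $\nabla^{N_*}z_-$ and $\nabla^{N_*+1}z_-$ along $X_-$ fails, we would fall back to the weighted spacetime bound \eqref{eq:pressure-higher-weighted}, retaining the weight $\langle u_+\rangle^{1+\delta}$ and pairing it with the flux energies, so as to absorb the top-order flow derivatives.
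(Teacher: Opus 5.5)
Your Step 1 (integrability of $\langle u_+\rangle^{-(1+\delta)}$ along $X_-$, Gronwall, $L^\infty$ flow bounds up to order $N_*-1$, volume preservation) and your Fa\`a di Bruno bookkeeping match the paper. There is a gap in Step 2, however. You claim that $\nabla_\xi^k X_-$ for $k=N_*,N_*+1$ lies in $L^2_\xi$ with size $C\varepsilon$ uniformly in time. That is false in general, so your growth-free bound $\|G(\tau)\|_{H^{N_*+1}_\xi}\le C\|\nabla p(\tau)\|_{H^{N_*+1}_x}$ is not established.

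Already at the linear level one has $z_-(s,X_-(s,\xi))\approx z_-(0)(\xi_1,\xi_2,\xi_3-2s)$, so for $|\alpha|\ge2$
\[
\partial_\xi^\alpha X_-(t,\xi)\approx\frac12\int_{\xi_3-2t}^{\xi_3}\partial^\alpha z_-(0)(\xi_1,\xi_2,y)\,dy .
\]
For purely transverse $\alpha$, this is close to $\frac12\partial^\alpha\int_{\mathbb{R}}z_-(0)(\xi_1,\xi_2,y)\,dy$ on a range of $\xi_3$ of length about $2t$. Every label whose $L_-$ curve has crossed the $z_-$ packet keeps this accumulated displacement. The $x_3$-integral is generically nonzero: for the data of Section~\ref{sec:numerics}, the third component of $\nabla\times(\psi_-,0,0)$ integrates to $-\partial_2\int\psi_-\,dx_3$. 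Hence $\|\partial_\xi^\alpha X_-(t)\|_{L^2_\xi}\gtrsim\varepsilon\sqrt t$.

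The flux bounds you invoke do not fix this. Fluxes through $\{u_-=c\}$ control $\int_0^\infty|\nabla^j z_-|$ along the curves with $\xi_3=c$ only slice by slice, with a supremum over $c$. That is $L^\infty_{\xi_3}L^2_{\xi_1,\xi_2}$ control, which cannot be integrated in $\xi_3$. Using it would require an anisotropic pairing with $\nabla^2 p$ that you do not set up. The rate $(R+T)^{-1/2-\delta}$ you obtain is stronger than what the theorem asserts, which should have flagged the problem.

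The repair is what the paper does in \eqref{eq:flow-high-hm}. Start from the pointwise Gronwall bound
\[
|D_\xi^{N_*}X_-|+|D_\xi^{N_*+1}X_-|\le C\int_0^t\sum_{j=2}^{N_*+1}|\nabla^jz_-(s,X_-(s,\xi))|\,ds .
\]
Apply Cauchy--Schwarz in $s$, splitting off $\langle u_+\rangle^{-(1+\delta)}$, whose square is integrable along $X_-$ uniformly in $\xi$. Then use volume preservation and the fixed-time energy bound of Theorem~A. This gives
\[
\|D_\xi^{N_*}X_-(t)\|_{L^2}+\|D_\xi^{N_*+1}X_-(t)\|_{L^2}\le C\varepsilon\sqrt t ,
\]
so your $g$ must be taken as $g(\tau)=C(1+\varepsilon\sqrt\tau)\|\nabla p(\tau)\|_{H^{N_*+1}_x}$.

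Your Cauchy--Schwarz step then gains the additional term
\[
C\varepsilon^3\Bigl(\int_T^\infty t(R+t)^{-2(1+\delta)}\,dt\Bigr)^{1/2}\le C_\delta\varepsilon^3(R+T)^{-\delta} .
\]
The half power of $(R+T)$ you had to spare is exactly what absorbs the $\sqrt t$ growth. This is why the theorem states the rate $(R+T)^{-\delta}$ rather than $(R+T)^{-1/2-\delta}$.
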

	
	\begin{proof} 	We only prove the estimate for $z_+$, since the estimate for $z_-$ is analogous. 
 Throughout the proof,
		$D_\xi^j$ denotes the collection of
	 derivatives with respect to the initial position $\xi$ 
		of order $j$, whereas $\nabla^j$ denotes Cartesian spatial derivatives.

	We  note that the characteristic geometry estimates (see Lemma 3.9 in \cite{Li2023}) 
	give $\nabla u_+=e_3+O(\varepsilon)$ uniformly. Together with
		Lemma~\ref{Lemma:Sobolev}, this yields
		\[
		\frac{d}{dt}u_+(t,X_-(t,\xi))
		=\big((Z_--Z_+)\cdot\nabla u_+\big)(t,X_-(t,\xi))
		=-2+O(\varepsilon).
		\]
		For sufficiently small $\varepsilon_0$, this derivative lies between
		$-3$ and $-1$. Changing variables along each characteristic gives
		\begin{equation}\label{eq:opposite-weight-integral-hm}
			\sup_{t\ge0,\,\xi\in\mathbb{R}^3}
			\int_0^t
			\langle u_+(s,X_-(s,\xi))\rangle^{-(1+\delta)}\,ds
			\le C_\delta.
		\end{equation}
		Differentiating the flow equation and applying
		Lemma~\ref{Lemma:Sobolev},
		\eqref{eq:opposite-weight-integral-hm}, and Gronwall's inequality
		inductively, we obtain
		\begin{equation}\label{eq:flow-low-hm}
			\sup_{t\ge0}\left(
			\|D_\xi X_-(t)-I\|_{L^\infty}
			+\sum_{j=2}^{N_*-1}
			\|D_\xi^jX_-(t)\|_{L^\infty}
			\right)\le C\varepsilon.
		\end{equation}

			For the two highest orders, the differentiated flow equations
			form a linear system in $D_\xi^{N_*}X_-$ and
			$D_\xi^{N_*+1}X_-$. By 
			Lemma~\ref{Lemma:Sobolev}, 
			\eqref{eq:opposite-weight-integral-hm} and \eqref{eq:flow-low-hm}, the coefficient
			integrals are bounded by $C\varepsilon$ uniformly in $t$ and $\xi$.
			Since these flow derivatives vanish at $t=0$,
			Gronwall's inequality yields
			\[
			|D_\xi^{N_*}X_-(t,\xi)|
			+|D_\xi^{N_*+1}X_-(t,\xi)|
			\le C\int_0^t\sum_{j=2}^{N_*+1}
			|\nabla^jz_-(s,X_-(s,\xi))|\,ds.
			\]
			Since $\operatorname{div} Z_-=0$, the flow $X_-$ preserves volume, i.e.  we have
			$\det D_\xi X_-(t,\xi)=1$.
			Moreover, due to $\langle u_+\rangle\ge1$,  \eqref{eq:opposite-weight-integral-hm} also holds
			with exponent $2(1+\delta)$.
			Thus weighted Cauchy--Schwarz inequality
			and Theorem~A give
			\begin{equation}\label{eq:flow-high-hm}
					\|D_\xi^{N_*}X_-(t)\|_{L^2}
					+\|D_\xi^{N_*+1}X_-(t)\|_{L^2}
				 \le C\left(
					\int_0^t\sum_{j=2}^{N_*+1}
					\|\langle u_+(s,\cdot)\rangle^{1+\delta}
					\nabla^jz_-(s)\|_{L^2}^2\,ds
					\right)^{1/2}
					\le C\varepsilon\sqrt t.
			\end{equation}

		We next apply the chain rule to composition with $X_-(t,\cdot)$.
		Terms containing only flow derivatives of order at most $N_*-1$
		are controlled by \eqref{eq:flow-low-hm} and volume preservation.
		A term containing a flow derivative of order $N_*$ or $N_*+1$
		contains at most two derivatives of the composed function.
		Thus \eqref{eq:flow-high-hm} gives, for $f\in H^{N_*+1}(\mathbb{R}^3)$,
		\begin{align}
			\|f\circ X_-(t,\cdot)\|_{H^{N_*+1}_\xi}
			&\le C\|f\|_{H^{N_*+1}_x}
			+C\varepsilon\sqrt t\,\|f\|_{W^{2,\infty}_x} \le C(1+\varepsilon\sqrt t)\|f\|_{H^{N_*+1}_x},
			\label{eq:pullback-hm}
		\end{align}
		where we used $H^{N_*+1}(\mathbb{R}^3)\hookrightarrow W^{2,\infty}(\mathbb{R}^3)$.
Since	Theorem~A and  Sobolev composition estimate give
		\[
		\left\|\sqrt{1+|Z_-(t)|^2}-\sqrt2\right\|_{H^{N_*+1}_x}
		\le C\|z_-(t)\|_{H^{N_*+1}_x}\le C\varepsilon,
		\]
then	Sobolev product   estimate further implies
		\[
		\left\|\nabla p(t)\sqrt{1+|Z_-(t)|^2}\right\|_{H^{N_*+1}_x}
		\le C\|\nabla p(t)\|_{H^{N_*+1}_x}.
		\]
		Combining this with \eqref{eq:pullback-hm}, we find
		\[
		\left\|
		\left(\nabla p\sqrt{1+|Z_-|^2}\right)
		(t,X_-(t,\cdot))
		\right\|_{H^{N_*+1}_\xi}
		\le C(1+\varepsilon\sqrt t)\|\nabla p(t)\|_{H^{N_*+1}_x}.
		\]	Thus the tail in \eqref{eq:tail} converges absolutely in $H^{N_*+1}$.

By Lemma~\ref{lem:pressure-higher},
		Minkowski's inequality and Cauchy--Schwarz inequality, we infer that
		\begin{align*}
			\|z_+^\infty-z_{+,T}\|_{H^{N_*+1}(\mathcal{F}_+)}
			&\le C\int_T^\infty
			(1+\varepsilon\sqrt t)\|\nabla p(t)\|_{H^{N_*+1}_x}\,dt\\
			&\le C\left[
			\left(\int_T^\infty(R+t)^{-2(1+\delta)}\,dt\right)^{1/2}
			+\varepsilon\left(\int_T^\infty t(R+t)^{-2(1+\delta)}\,dt\right)^{1/2}
			\right]\\
			&\qquad\times
			\left(\int_T^\infty(R+t)^{2(1+\delta)}
			\|\nabla p(t)\|_{H^{N_*+1}_x}^2\,dt\right)^{1/2}\\
			&\le C_\delta\varepsilon^2
			\left((R+T)^{-\frac12-\delta}
			+\varepsilon(R+T)^{-\delta}\right)\\
			&\le C_\delta\varepsilon^2(R+T)^{-\delta},
		\end{align*}
where  we have used
		\[
		\int_T^\infty(R+t)^{-2(1+\delta)}\,dt
		=\frac{(R+T)^{-1-2\delta}}{1+2\delta},
		\qquad
		\int_T^\infty t(R+t)^{-2(1+\delta)}\,dt
		\le\frac{(R+T)^{-2\delta}}{2\delta},
		\]
		as well as $\varepsilon\in (0,1)$ and $R+T\ge1$. 	The estimate for $z_-$ follows analogously along $X_+$.
		Adding the two bounds finally proves \eqref{eq:finite-rate-sobolev}. 
\end{proof}

\begin{remark}[Relation to the existing scattering theory]
The existence and weighted Sobolev regularity of $z_\pm^{\infty}$,  the pressure estimate in  Lemma~\ref{lem:pressure-gradient}, and the higher-order weighted pressure estimates in Lemma~\ref{lem:pressure-higher} belong to the established  theory in \cite{Li2023}.  Theorems~\ref{thm:finite-time} and~\ref{thm:finite-time-sobolev} isolate their quantitative  finite-time approximation consequences: instead of only taking $T\to\infty$, they record the explicit tail rate $(R+T)^{-\delta}$.  
Moreover, the Sobolev estimate retains the initial derivative order in an unweighted norm. No additional structural assumption on the solution is introduced.
\end{remark}

\begin{remark}[Rate inherited from the weight]
The exponent $\delta$ is determined by the choice of $1+\delta$ in the weighted energy.  
For the pointwise estimate, any improvement of the integrable pressure decay immediately improves the finite-time approximation rate by integration of the tail. For the Sobolev estimate, the rate also depends on the control of high-order derivatives of the characteristic flows.
\end{remark}

\section{Numerical validation}\label{sec:numerics}
We examine the quadratic amplitude dependence in Theorems~\ref{thm:finite-time} and~\ref{thm:finite-time-sobolev} using two norms of the same pressure-integral tails.
The computations use a finite periodic domain and finite reference times.
They test the $\varepsilon^2$ dependence and check that the observed behaviour is stable under parameter variation.

\subsection{Numerical method and parameters}\label{sec:numerical-setup}
Let $\sigma=2.5$, $d=(0.7, -0.4, 0.3)$, and
\begin{equation*}
 \psi_+(x)=e^{-|x|^2/\sigma^2},~
 \psi_-(x)=e^{-|x-d|^2/\sigma^2},~
 f_+=\nabla\times(0,\psi_+,0),~
 f_-=\nabla\times(\psi_-,0,0).
\end{equation*}
With $R=100$, $\delta=0.5$ and $N_*=5$, define the whole-space normalization by
\begin{equation*}
 K^2=\sum_\pm\sum_{k=0}^{6}
 \big\|(R^2+x_3^2)^{(1+\delta)/2}\nabla^k f_\pm\big\|_{L^2(\mathbb{R}^3)}^2,
 \quad
 |\nabla^k f|^2=\sum_{|\alpha|=k}\frac{k!}{\alpha!}|\partial^\alpha f|^2.
\end{equation*}
Here $\alpha=(\alpha_1,\alpha_2,\alpha_3)\in\mathbb{N}_0^3$ is a multiindex, with $|\alpha|=\alpha_1+\alpha_2+\alpha_3$ and $\alpha!=\alpha_1!\alpha_2!\alpha_3!$.
Thus the localized data $\varepsilon f_\pm/K$ have weighted energy
$\mathcal{E}^5(0)=\varepsilon^2$, with $K\approx20965$.
For the computation, we make the Gaussian potentials periodic over $\Omega=\prod_{j=1}^3[-L_j/2,L_j/2)$,
take their curls, and multiply by $\varepsilon/K$,
then the resulting initial fields are periodic and divergence-free.

We use a Fourier pseudospectral method with strict $2/3$ dealiasing,
retaining $3|n_j|<N_j$~\cite{Orszag1971}.
Here $N_j$ is the number of grid points and $n_j$ is the integer Fourier mode index in direction $j$.
At every stage the Helmholtz--Leray projection~\cite{Leray1934,Orszag1971} is used to enforce the divergence-free constraint,
while the longitudinal part of the nonlinear term gives the pressure gradient.
Background translations are integrated exactly in Fourier space, and a fourth-order Runge--Kutta method~\cite{Lawson1967} advances the nonlinear fields, characteristic displacements and pressure integrals.
The domain lengths $L=(L_1,L_2,L_3)$, physical grid sizes $N=(N_1,N_2,N_3)$, grid spacings, time-step size, and amplitude are
\begin{align*}
	&L=(32,32,48),\quad N=(64,64,96),\quad
	h_x=h_y=h_z=0.5,\\
	&\Delta t=0.05,\quad\varepsilon\in\{0.4, 0.2, 0.1, 0.05, 0.025\}.
\end{align*}
We integrate the characteristic trajectories $X_\mp(t;\xi)$, with $X_\mp(0;\xi)=\xi$, from a uniform periodic grid $\Gamma_{\rm int}$ of initial labels. This grid has $M_{\rm int}=64\times64\times96$ points and spacing $h_{\rm int}=0.5$ in each direction.
For each parameter setting, both norms are obtained from one shared evolution of the fields and characteristics.
We observe the tails at $T=0,1,2,3,4,5,6$ and use $T_{\texttt{ref}}=10$ as the primary reference. Each evolution continues to $t=12$, allowing reference times $10$ and $12$ to be compared.
Define
\begin{equation}\label{eq:numerical-sobolev-tail}
	 Q_{\pm,T}^{\varepsilon,T_{\texttt{ref}}}(\xi) := \int_T^{T_{\texttt{ref}}}\nabla p^\varepsilon\bigl(t,X_\mp(t;\xi)\bigr)\sqrt{1+|\dot X_\mp(t;\xi)|^2}\,dt.
\end{equation}
By \eqref{eq:trunc-plus}--\eqref{eq:trunc-minus},
$Q_{\pm,T}^{\varepsilon,T_{\texttt{ref}}} = z_{\pm,T}^{\varepsilon}-z_{\pm,T_{\texttt{ref}}}^{\varepsilon}$.
Here $p^\varepsilon$ is the pressure corresponding to the initial amplitude $\varepsilon$. After quadrature, we form the Fourier interpolant $Q_{h,\pm,T}^{\varepsilon,T_{\texttt{ref}}}$ of each vector tail from its values on $\Gamma_{\rm int}$.

\subsection{Pointwise tail estimate}\label{sec:pointwise-numerics}
For the tails in \eqref{eq:numerical-sobolev-tail}, let $\Gamma_h$ be a uniform measurement grid in the domain $\Omega$, with $M_{\rm eval}=496\times496\times744$ sample points. 
We define the sampled pointwise tail error by
\begin{equation*}
 E_{\infty,h}^{\varepsilon}(T,T_{\texttt{ref}})
 :=\sum_\pm\max_{\xi\in\Gamma_h}
       |Q_{h,\pm,T}^{\varepsilon,T_{\texttt{ref}}}(\xi)|,
\end{equation*}
Figure~\ref{fig:order} plots this quantity against $\varepsilon$ at each observation time.
The measurement grid improves the sampling of the maximum of the same Fourier interpolant.
The curves exhibit approximately quadratic dependence on $\varepsilon$.
Figure~\ref{fig:solution} displays the physical fields and their central sections from the same evolution at $T=6$ with $\varepsilon=0.1$.

\begin{figure}[htb!]
	\centering
	\includegraphics[width=0.8\linewidth]{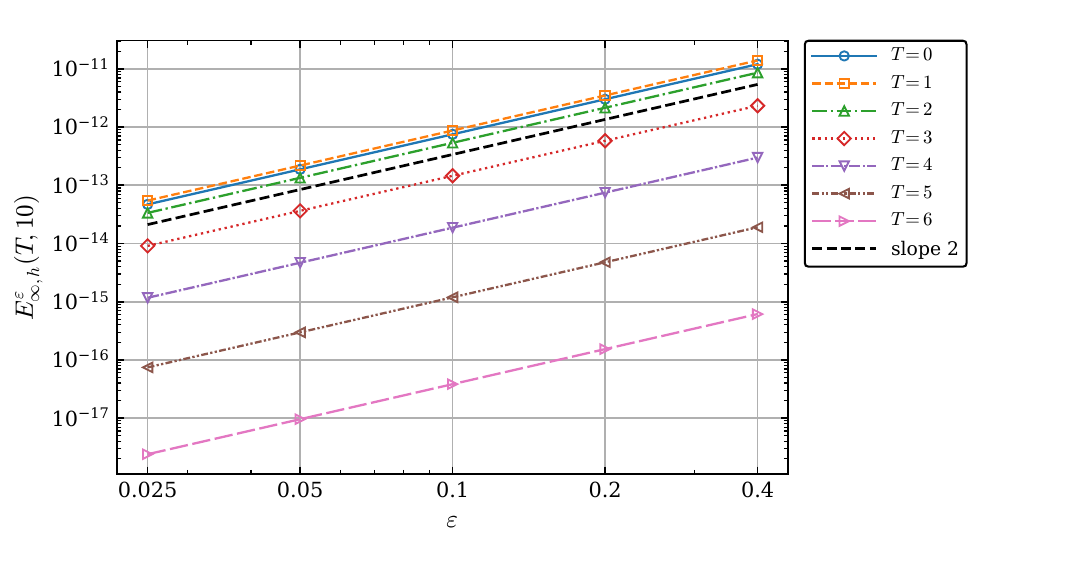}
	\caption{
	Sampled pointwise tail error with the baseline parameters in Section~\ref{sec:numerical-setup} and $T_{\texttt{ref}}=10$.
	The curves correspond to $T=0,1,2,3,4,5,6$, and the dashed black line has slope two.
	}
	\label{fig:order}
\end{figure}

\begin{figure}[htb!]
	\centering
	\includegraphics[width=\linewidth]{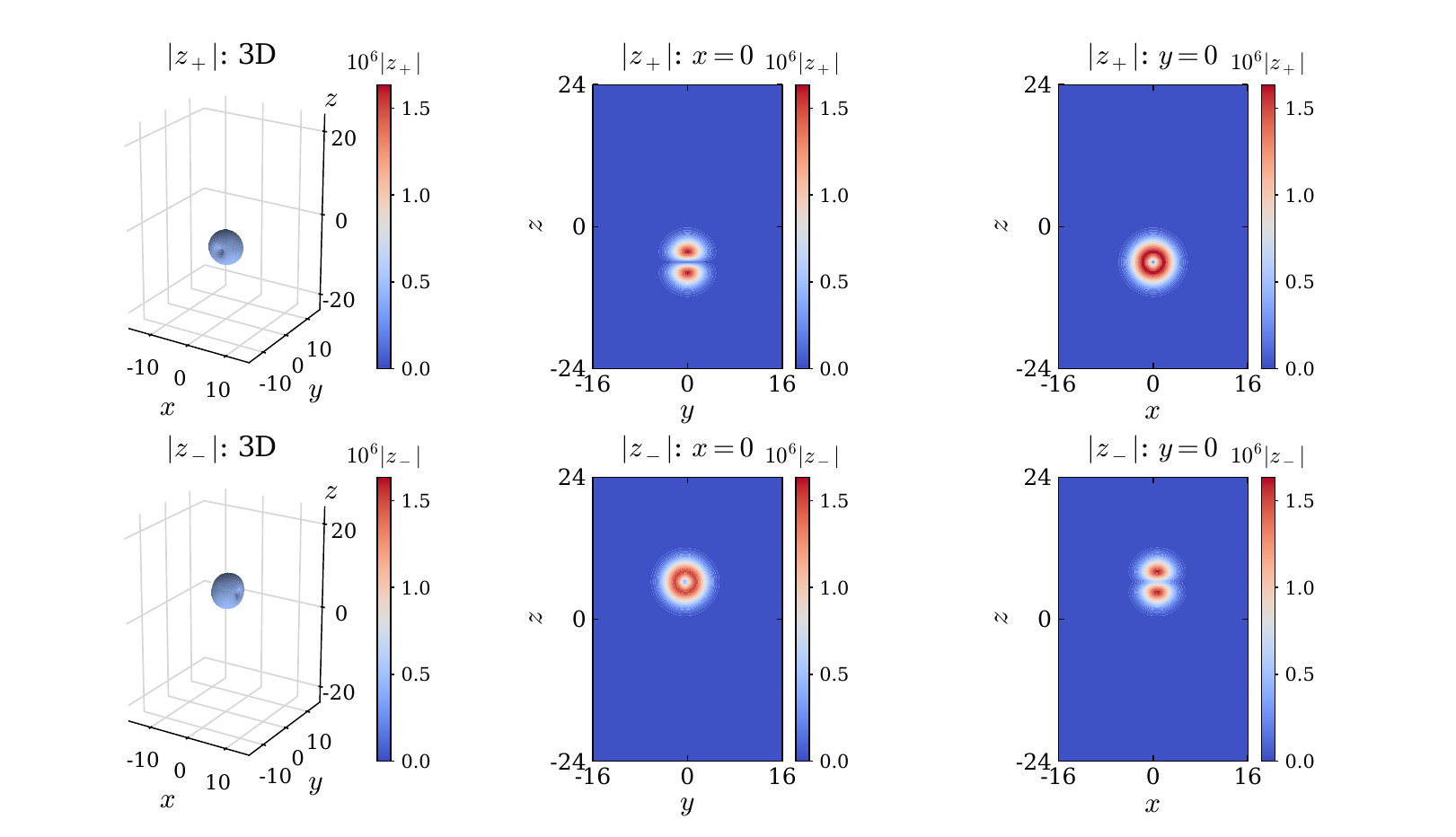}
	\caption{The numerical solution $|z_+|$ (top) and $|z_-|$ (bottom) at $T=6$ with $\varepsilon=0.1$:
	three-dimensional isosurfaces (left), $x=0$ sections (middle) and $y=0$ sections (right).}
	\label{fig:solution}
\end{figure}

For the sensitivity check, we vary one parameter at a time: halve the time step, refine the integration-label or physical grid, enlarge the domain, or increase the reference time.
The varied values are $\Delta t'=0.025$, $M_{\rm int}'=96\times96\times144$, $N'=(96,96,144)$, $L'=(48,48,72)$ and $T_{\texttt{ref}}'=12$.
Physical-grid refinement keeps both label grids fixed. Domain enlargement increases the physical and label grid sizes proportionally, preserving their respective spacings.
The time-step and grid/domain comparisons use $\varepsilon=0.4$ and $0.025$, all observation times and both reference times. The reference-time comparison uses all five amplitudes and all observation times.
For either measured norm $E$, we compute
\begin{equation*}
	\Delta_{\rm abs}=\max|E'-E|,\quad
	\Delta_{\rm rel}=\max\frac{|E'-E|}{E},\quad
	p_{\rm end}(T,T_{\texttt{ref}})=
	\frac{\log(E^{0.4}/E^{0.025})}{\log16},
\end{equation*}
Here $E=E^\varepsilon(T,T_{\texttt{ref}})$ denotes the baseline value and $E'$ with the prime denotes the varied setting. The maxima cover the comparison ranges specified above. We also compute $\Delta p_{\rm end}=p_{\rm end}'-p_{\rm end}$.
In both norms, the absolute and relative changes and the maximum endpoint-order changes are below $10^{-3}$. These small changes support the stability of the observed $\varepsilon^2$ scaling under the tested variations.

\begin{table}[htb!]
	\centering
	\caption{
	Sensitivity of the sampled pointwise tail error. The $p_{\rm end}$ column gives the endpoint amplitude order at $T=6$ with $T_{\texttt{ref}}=10$ or $T_{\texttt{ref}}=12$.
	The last column gives the maximum change over the compared times.
	}
	\label{tab:sanity}
	\begin{tabular}{lrrrr}
	\toprule
	 Setting & Max. absolute change & Max. relative change & $p_{\rm end}$ & $\max|\Delta p_{\rm end}|$\\
	 \midrule
	Baseline & --- & --- & 2 & --- \\
	$\Delta t/2$ & $6.74\times10^{-20}$ & $2.34\times10^{-7}$ & 2 & $2.18\times10^{-12}$ \\
	Integration labels & $2.58\times10^{-26}$ & $2.15\times10^{-15}$ & 2 & $8.88\times10^{-16}$ \\
	Physical mesh & $6.06\times10^{-18}$ & $5.18\times10^{-7}$ & 2 & $5.15\times10^{-10}$ \\
	Domain & $4.52\times10^{-16}$ & $3.33\times10^{-5}$ & 2 & $2.11\times10^{-8}$ \\
	$T_{\rm ref}$ & $6.35\times10^{-22}$ & $2.14\times10^{-7}$ & 2 & $7.27\times10^{-8}$ \\
	\bottomrule
	\end{tabular}
\end{table}

\subsection{Unweighted $H^6$ tail estimate}\label{sec:sobolev-numerics}
For the same vector tails, we measure the unweighted $H^6$ norm as follows
\begin{equation*}
 E_{6,h}^{\varepsilon}(T,T_{\texttt{ref}})
 :=\sum_\pm\left(\sum_{|\alpha|\le6}\int_\Omega
 |\partial_\xi^\alpha Q_{h,\pm,T}^{\varepsilon,T_{\texttt{ref}}}(\xi)|^2\,d\xi
 \right)^{1/2}.
\end{equation*}
Figure~\ref{fig:sobolev-order} shows the results with the same parameters as Figure~\ref{fig:order}.
The adjacent-amplitude orders, computed as $\log(E_{6,h}^{\varepsilon}/E_{6,h}^{\varepsilon/2})/\log 2$ at fixed $T$ and $T_{\texttt{ref}}$, are all close to two.

\begin{figure}[htb!]
	\centering
	\includegraphics[width=0.8\linewidth]{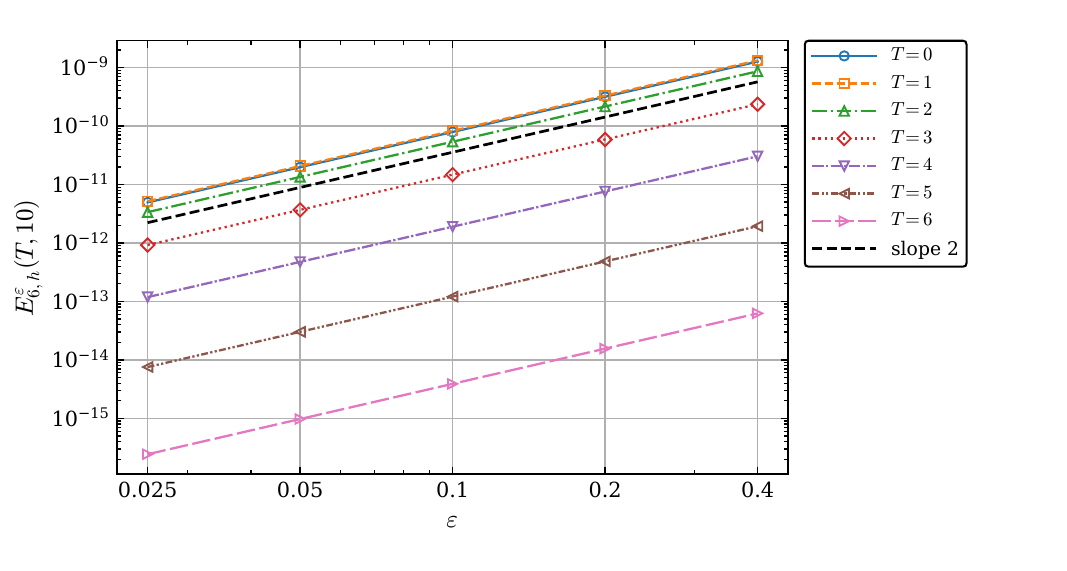}
	\caption{
	Unweighted $H^6$ tail error of the vector tails as in Figure~\ref{fig:order}, with the same evolution parameters, amplitudes and comparison times.
	}
	\label{fig:sobolev-order}
\end{figure}

Table~\ref{tab:sobolev-sensitivity} records the same sensitivity comparisons for $E_{6,h}^{\varepsilon}$ as Table~\ref{tab:sanity}. The small changes show that the measured $H^6$ tail errors and their amplitude scaling are stable under these parameter variations.

\begin{table}[!htb]
	\centering
	\caption{
	Sensitivity of the unweighted $H^6$ tail error.
	The parameter variations are the same as in Table~\ref{tab:sanity}.
	}
	\label{tab:sobolev-sensitivity}

	\begin{tabular}{lrrrr}
		\toprule
		 Setting & Max. absolute change & Max. relative change & $p_{\rm end}$ & $\max|\Delta p_{\rm end}|$\\
		 \midrule
		Baseline & --- & --- & 2 & --- \\
		$\Delta t/2$ & $9.94\times10^{-18}$ & $2.28\times10^{-7}$ & 2 & $1.31\times10^{-12}$ \\
		Integration labels & $4.14\times10^{-25}$ & $4.80\times10^{-16}$ & 2 & $4.44\times10^{-16}$ \\
		Physical mesh & $1.04\times10^{-16}$ & $7.85\times10^{-8}$ & 2 & $1.64\times10^{-10}$ \\
		Domain & $2.05\times10^{-16}$ & $1.55\times10^{-7}$ & 2 & $5.02\times10^{-10}$ \\
		$T_{\rm ref}$ & $4.12\times10^{-20}$ & $7.92\times10^{-8}$ & 2 & $2.69\times10^{-8}$ \\
		\bottomrule
	\end{tabular}
\end{table}

\section{Concluding remarks}

Building on the global weighted estimates in \cite{Li2023},
we have established quantitative finite-time approximations of the future scattering fields in both $L^\infty$ and unweighted $H^{N_*+1}$,
with error bounded by $C_\delta\varepsilon^2(R+T)^{-\delta}$.
The pointwise estimate follows by integrating the pressure decay,
while the Sobolev estimate also uses bounds for derivatives of the characteristic flows.

The numerical experiments measure the same pressure-integral tails in the sampled pointwise norm and the unweighted $H^6$ norm,
which support the $\varepsilon^2$ dependence.
The observed scaling remains consistent under the tested variations in the time step, spatial resolution, domain size, and reference time.
It would be interesting to investigate whether sharper decay assumptions,
different spatial weights, or additional structure of the initial perturbation can yield faster finite-time rates.

\bigskip

\noindent
\textbf{Ethics declarations}: 
The authors declare that there is no conflict of interest.

\bigskip

\noindent
\textbf{Acknowledgements}:
This work is supported in part by  Natural Science Foundation of Jiangsu Province, Jiangsu Provincial Scientific Research Center of Applied Mathematics (Grant No. BK20233002), National Natural Science Foundation of China (No. 12671481),  1+1+1 CUHK-CUHK(SZ)-GDSTC Joint Collaboration Fund
(No. 2025A0505000079), University Development Fund (No. UDF01004388), and Alexander von Humboldt Foundation Research Fellowship (No. CHN-1249639-HFST-P, No. CHN-1234352-HFST-P).


\begin{thebibliography}{99}
\small
\bibitem{Alfven1942}
H.~Alfv\'en,
Existence of electromagnetic-hydrodynamic waves,
\emph{Nature} \textbf{150} (1942), 405--406.
\url{https://doi.org/10.1038/150405d0}.

\bibitem{BardosSulemSulem1988}
C.~Bardos, C.~Sulem and P.-L. Sulem,
Longtime dynamics of a conductive fluid in the presence of a strong magnetic field,
\emph{Trans. Amer. Math. Soc.} \textbf{305} (1988), 175--191.
\url{https://doi.org/10.1090/S0002-9947-1988-0920153-5}.



 
\bibitem{HeXuYu2018}
L.-B. He, L. Xu and P. Yu,
On global dynamics of three dimensional magnetohydrodynamics: nonlinear stability of Alfv\'en waves,
\emph{Ann. PDE} \textbf{4} (2018), Art.~5, 105 pp.
\url{https://doi.org/10.1007/s40818-017-0041-9}.
 
\bibitem{Friedlander1980}
F.~G. Friedlander,
Radiation fields and hyperbolic scattering theory,
Math. Proc. Cambridge Philos. Soc. \textbf{88} (1980), 483--515.


\bibitem{Lawson1967}
J.~D. Lawson,
Generalized Runge--Kutta processes for stable systems with large Lipschitz constants,
\emph{SIAM J. Numer. Anal.} \textbf{4} (1967), 372--380.
\url{https://doi.org/10.1137/0704033}.

\bibitem{LaxPhillips1967}
P.~D. Lax and R.~S. Phillips,
\emph{Scattering Theory},
Pure and Applied Mathematics, Vol.~26, Academic Press, New York--London, 1967.

\bibitem{Leray1934}
J.~Leray,
Sur le mouvement d'un liquide visqueux emplissant l'espace,
\emph{Acta Math.} \textbf{63} (1934), 193--248.
\url{https://doi.org/10.1007/BF02547354}.

\bibitem{Li2023}
M. Li,
Inverse scattering of Alfv\'en waves in three dimensional ideal magnetohydrodynamics,
\emph{Adv. Math.} \textbf{435} (2023), Part A, Art.~109363, 69 pp.
\url{https://doi.org/10.1016/j.aim.2023.109363}.

 

\bibitem{LiYu2021}
M. Li and P. Yu,
On the rigidity from infinity for nonlinear Alfv\'en waves,
\emph{J. Differential Equations} \textbf{283} (2021), 163--215.
\url{https://doi.org/10.1016/j.jde.2021.02.036}.




	
	
	
%
	
	
\bibitem{MaronGoldreich2001}
J.~Maron and P.~Goldreich,
Simulations of incompressible magnetohydrodynamic turbulence,
\emph{Astrophys. J.} \textbf{554} (2001), 1175--1196.
\url{https://doi.org/10.1086/321413}.

\bibitem{NielsonHowesDorland2013}
K.~D. Nielson, G.~G. Howes and W.~Dorland,
Alfv\'en wave collisions, the fundamental building block of plasma turbulence.
II. Numerical solution,
\emph{Phys. Plasmas} \textbf{20} (2013), Art.~072303.
\url{https://doi.org/10.1063/1.4812807}.




 
	
	
	\bibitem{Orszag1971}
	S.~A. Orszag,
	Numerical simulation of incompressible flows within simple boundaries. I. Galerkin (spectral) representations,
	\emph{Stud. Appl. Math.} \textbf{50} (1971), 293--327.
	\url{https://doi.org/10.1002/sapm1971504293}.
	



 

\bibitem{ShestovVoitenkoZhukov2022}
S.~V. Shestov, Y.~M. Voitenko and A.~N. Zhukov,
Initiation of Alfv\'enic turbulence by Alfv\'en wave collisions: A numerical study,
\emph{Astron. Astrophys.} \textbf{661} (2022), Art.~A93.
\url{https://doi.org/10.1051/0004-6361/202142362}.

\bibitem{VernieroHowesKlein2018}
J.~L. Verniero, G.~G. Howes and K.~G. Klein,
Nonlinear energy transfer and current sheet development in localized
Alfv\'en wavepacket collisions in the strong turbulence limit,
\emph{J. Plasma Phys.} \textbf{84} (2018), Art.~905840103.
\url{https://doi.org/10.1017/S0022377817001003}.


%
 
\end{thebibliography}
\end{document}